\theoremstyle{plain}
\newtheorem{theorem}{Theorem}[section]
\newtheorem{lemma}[theorem]{Lemma}
\newtheorem{corollary}[theorem]{Corollary}
\DeclarePairedDelimiter\I{[\![}{]\!]}
\DeclarePairedDelimiter\abs{\lvert}{\rvert}
\DeclarePairedDelimiter\step{\langle}{\rangle}
\newcommand{\MT}{{\bigtriangleup}_M} 
\newcommand{\TT}{{\bigtriangleup}_T} 
\newcommand{\SVMS}{\mathfrak{M}} 
\newcommand{\SVMW}{\mathfrak{A}} 
\newcommand{\MWSet}{\mathbf{A^{Q}}}  
\newcommand{\MWRSet}{\mathbf{A^{Q}_{R}}} 
\newcommand{\MFlatCount}{M_{\invdiameter}^{Q}}
\newcommand{\GMFlatCount}{M_{\invdiameter}^{H}}
\newcommand{\GMWSet}{\mathbf{A^{H}}} 
\newcommand{\GMWRSet}{\mathbf{A^{H}_{R}}} 
\newcommand{\MSCount}{M^{Q}} 
\newcommand{\MWCount}{A^{Q}}  
\newcommand{\MWRCount}{A^{Q}_{R}} 
\newcommand{\GMSCount}{M^{H}} 
\newcommand{\GMWCount}{A^{H}} 
\newcommand{\GMWRCount}{A^{H}_{R}} 
\newcommand{\SVDS}{\mathfrak{D}} 
\newcommand{\SVDW}{\mathfrak{B}} 
\newcommand{\DWSet}{\mathbf{B^{Q}}}  
\newcommand{\DWRSet}{\mathbf{B^{Q}_{R}}} 
\newcommand{\GDWSet}{\mathbf{B^{H}}} 
\newcommand{\GDWRSet}{\mathbf{B^{H}_{R}}} 
\newcommand{\DWCount}{B^{Q}}  
\newcommand{\DWRCount}{B^{Q}_{R}} 
\newcommand{\GDWCount}{B^{H}} 
\newcommand{\GDWRCount}{B^{H}_{R}} 
\newcommand{\SVCS}{\mathfrak{S}} 
\newcommand{\SVCW}{\mathfrak{C}} 
\newcommand{\CWSet}{\mathbf{C^{Q}}}  
\newcommand{\CWRSet}{\mathbf{C^{Q}_{R}}} 
\newcommand{\GCWSet}{\mathbf{C^{H}}} 
\newcommand{\GCWRSet}{\mathbf{C^{H}_{R}}} 
\newcommand{\CWCount}{C^{Q}}  
\newcommand{\CWRCount}{C^{Q}_{R}} 
\newcommand{\GCWCount}{C^{H}} 
\newcommand{\GCWRCount}{C^{H}_{R}} 
\newcommand{\ssd}{Smooth } 
\newcommand{\mflat}{m_{\invdiameter}^{Q}}
\newcommand{\gmflat}{m_{\invdiameter}^{H}}
\newcommand{\mw}{a^{Q}}
\newcommand{\mwr}{a^{Q}_{R}}
\newcommand{\gmw}{a^{H}}
\newcommand{\gmwr}{a^{H}_{R}}
\newcommand{\dw}{b^{Q}}
\newcommand{\dwr}{b^{Q}_{R}}
\newcommand{\gdw}{b^{H}}
\newcommand{\gdwr}{b^{H}_{R}}
\newcommand{\gdflat}{d_{\invdiameter}^{H}}
\newcommand{\cw}{c^{Q}}
\newcommand{\cwr}{c^{Q}_{R}}
\newcommand{\gcw}{c^{H}}
\newcommand{\gcwr}{c^{H}_{R}}
\newcommand{\vc}{vertically constrained\xspace}
\newcommand{\VC}{Vertically constrained\xspace}
\newcommand\g[1]{
    {\footnotesize \emph{#1}}
}
\newcommand\s[1]{
    \footnotesize{#1}
}
\newcommand\q[1]{
    \scriptsize{#1}
}
\newcommand
    \def\svgwidth{[}%
    \def\svgwidth{#1}%
\begin{document}

\title{Vertically constrained Motzkin-like paths inspired by bobbin lace}%

\author{Veronika Irvine\thanks{Supported by NSERC PDF}\\
\small Cheriton School of Computer Science\\[-0.8ex]
\small University of Waterloo\\[-0.8ex]
\small Waterloo ON, Canada\\
\small\tt virvine@uwaterloo.ca\\
\and
Stephen Melczer\thanks{Supported by NSERC PDF and NSF Grant DMS-1612674}\\
\small Department of Mathematics\\[-0.8ex]
\small University of Pennsylvania \\[-0.8ex]
\small Philadelphia, PA\\
\and
Frank Ruskey\thanks{Supported by NSERC Discovery Grants RGPIN-3379-2009 and RGPIN-2014-04883}\\
\small Department of Computer Science\\[-0.8ex]
\small University of Victoria\\[-0.8ex]
\small Victoria BC, Canada\\
}

\date{April 13, 2019}

\maketitle

\begin{abstract}
Inspired by a new mathematical model for bobbin lace, this paper considers finite lattice paths formed from the set of step vectors $\SVMW=$$\{\rightarrow,$ $\nearrow,$ $\searrow,$ $\uparrow,$ $\downarrow\}$ with the restriction that vertical steps $(\uparrow, \downarrow)$ cannot be consecutive. The set $\SVMW$ is the union of the well known Motzkin step vectors $\SVMS=$$\{\rightarrow,$ $\nearrow,$ $\searrow\}$ with the vertical steps $\{\uparrow, \downarrow\}$. An explicit bijection $\phi$ between the exhaustive set of \vc paths formed from $\SVMW$ and a bisection of the paths generated by $\SVMS$ is presented. In a similar manner, paths with the step vectors $\SVDW=$$\{\nearrow,$ $\searrow,$ $\uparrow,$ $\downarrow\}$, the union of Dyck step vectors and constrained vertical steps, are examined.  We show, using the same $\phi$ mapping, that there is a bijection between \vc $\SVDW$ paths and the subset of Motzkin paths avoiding horizontal steps at even indices.  Generating functions are derived to enumerate these \vc, partially directed paths when restricted to the half and quarter-plane.  Finally, we extend Schr\"{o}der and Delannoy step sets in a similar manner and find a bijection between these paths and a subset of Schr\"{o}der paths that are smooth (do not change direction) at a regular horizontal interval.
\end{abstract}

\section{Introduction}

\begin{figure}
\centering
    \def\svgwidth{0.6\textwidth}%
    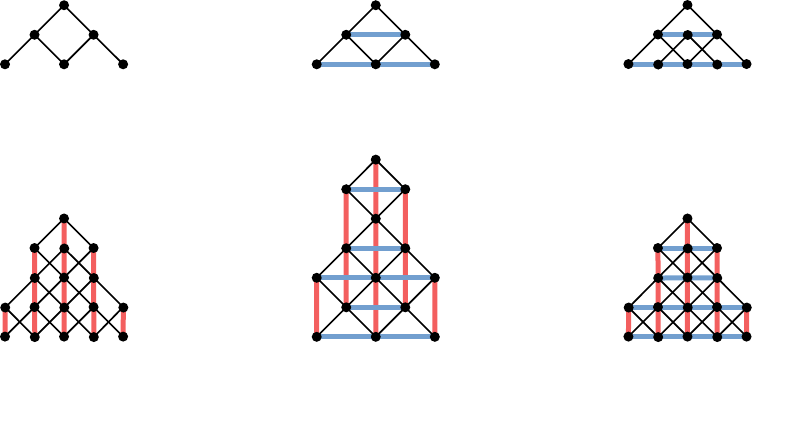%

\caption{Underlying graphs induced by the complete set of paths in the quarter-plane for some well known directed lattice path classes and their \vc extensions.}
\label{fig:summary}
\end{figure}

Lattice path enumeration is a classic part of combinatorial enumeration, having been explored for over a century.
Research in this area has resulted in well known classes of lattice paths such as those named after Dyck, Motzkin, Schr\"{o}der and Delannoy~\cite{donaghey,Stanley2012, barcuccimotzkin}.  Many variations on lattice paths have been examined, and we refer the reader to the survey by Humphreys~\cite{humphreys} for additional historical information.

In the process of creating a mathematical model for a fiber art form known as bobbin lace~\cite{irvine}, we encountered lattice paths very similar to the Motzkin paths but with the addition of vertical steps.  An example of one of these partially directed (weakly monotonic in the $x$ direction and self avoiding) paths is shown in Figure \ref{fig:laceconnection}.  In recent work, Dziemia\'nczuk~\cite{dziemianczuk} examined lattice paths with a vertical step in the down direction $(\downarrow)$.  When only down steps are allowed, the number of paths between the origin and a termination point remains finite. With the addition of both up and down step vectors, an infinite number of paths are possible.  Traditionally in such cases the number of paths is restricted to a finite value by considering only paths with a fixed number of steps (see for example the ``slow walk'' example of Niederhausen and Heinrich~\cite{niederhausen}) or by constraining the walk to a certain region of space such as a wedge or a slit~\cite{van2008partially}.   In contrast, for the self-avoiding lattice paths encountered in bobbin lace, the constraint that limits the number of walks ending at a specified point to a finite number is that vertical steps $(\uparrow, \downarrow)$ cannot be consecutive.

The significance of this constraint on vertical steps can be understood by looking at the role of the paths in bobbin lace tessellations.  A drawing embedded on the torus of a 2-regular directed graph $D(V,E)$ can be used to model the placement of threads in a lace tessellation~\cite{irvine}.  In order to describe a workable lace pattern, $D(V,E)$ must have the property that it can be partitioned into a set of osculating paths: paths that meet at a vertex but do not cross transversely. Furthermore, at every vertex of $D(V,E)$, two paths must meet.  It is not possible to combine two weakly monotonic  lattice path in this manner if one or both of the paths contains consecutive vertical steps.

\begin{figure}[H]
\centering
    \def\svgwidth{0.8\textwidth}%
    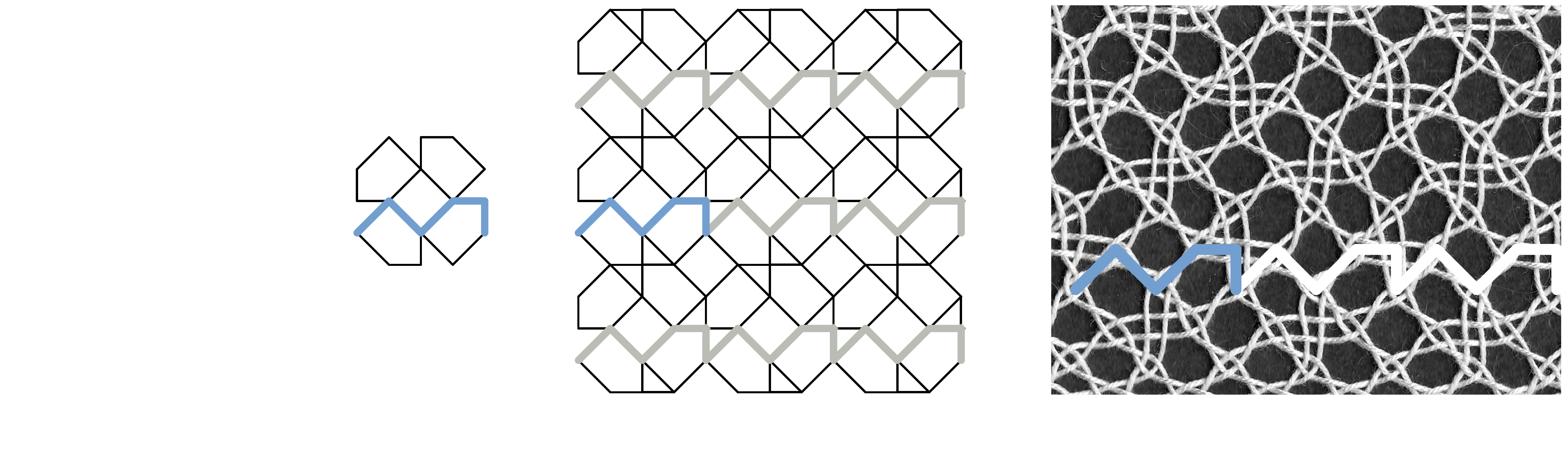%

\caption{Bobbin lace pattern: a) Diagram of threads. b) Unit of pattern. c) Periodic tiling of pattern. d)  Execution of lace pattern in cotton thread.}
\label{fig:laceconnection}
\end{figure}

The purpose of this study is to examine \vc lattice paths on the discrete Cartesian plane $\mathbb{Z} \times \mathbb{Z}$ constructed from the finite set of step vectors $\SVMW=\{\step{1,0}, \step{1,1},$ $\step{1,-1}, \step{0,1}, \step{0,-1} \}$.  Here $\SVMW$ is the union of the set of step vectors used in Motzkin paths with the set of up and down step vectors.  We use the term \emph{\vc} to indicate paths that do not have consecutive vertical steps.  That is, the following sequences are not permitted: $\uparrow\uparrow$, $\downarrow\downarrow$, as well as the self intersecting sequences $\uparrow\downarrow$ and $\downarrow\uparrow$.

In Section \ref{sec:recurrence} we describe the recurrence relation for \vc $\SVMW$ paths. In Section \ref{sec:bijection} we  identify a bijection between \vc $\SVMW$ paths and Motzkin paths. In Section \ref{sec:genfunction} we derive a generating function for $\SVMW$ paths in the half-plane making use of the recurrence relation, and in the quarter-plane using the bijection.  The same approach is applied in Section \ref{sec:extensions} to extend both the Dyck and Schr\"{o}der classes of paths. We conclude with suggestions for further exploration of this family of partially directed, vertically constrained lattice paths.

\section{Recurrence Relations}
\label{sec:recurrence}
The lattice paths discussed in this paper start at the origin and travel in a weakly monotonic manner to the right. A point that is a horizontal distance $n$ and vertical distance $m$ from the origin will be represented as $(n,m)$.

As shown in Table \ref{table:cases}, the lattice paths can be divided into four classes using two boolean properties:

1) Can the path extend below the $x$-axis? That is, is the path restricted to the upper right quarter-plane (Q) or can it travel anywhere in the right half-plane (H)?

2) Can the leading step be a vertical step vector?  In other words, is the leading step, $e_1$, chosen from the set $\SVMW$ (a leading vertical step is allowed) or from the set $\SVMS$ (leading step is not vertical)?

The importance of this second property will become clear when we describe a bijection with Motzkin paths in Section \ref{sec:bijection}.

\begin{table}[H]
\begin{center}
\begin{tabular}{|l|l|l|l|}
\hline
Name & Restricted to Q & Leading Step & Description \\ \hline \hline
$\GMWSet$   & False & $e_1 \in \SVMW$ & Leading vertical steps allowed \\ \hline
$\GMWRSet$  & False & $e_1 \in \SVMS$ & No leading vertical steps \\ \hline
$\MWSet$    & True  & $e_1 \in \SVMW$ & Restricted to quarter-plane, \\
            &       &                 & leading vertical steps allowed \\ \hline
$\MWRSet$   & True  & $e_1 \in \SVMS$ & Restricted to quarter-plane, \\
            &       &                 & no leading vertical steps \\ \hline
\end{tabular}
\end{center}
\caption{Four classes of \vc lattice paths}
\label{table:cases}
\end{table}

We define $\GMWSet$ to be the set of partially directed, \vc lattice paths in the half-plane created from step vectors $\SVMW$.  Similarly, we denote the number of paths of type $\GMWSet$ starting at point $(0,0)$ and terminating at point $(n, m)$ by $\GMWCount(n,m)$.

For a proposition $P$, let $\I{P}$ be 1 if $P$ is true and 0 otherwise.

\begin{lemma}
$\GMWCount(n,m)$ satisfies the recurrence relation
\begin{align}
\GMWCount(0,m) &= \I{ m \in \{-1,0,1\} }\label{eq:recurrenceAInitial} \\
\GMWCount(n,m) &= \GMWCount(n{-}1,m{+}2) + 2\GMWCount(n{-}1,m{+}1) + 3\GMWCount(n{-}1,m) \nonumber \\
       &\quad{+} \, 2\GMWCount(n{-}1,m{-}1) + \GMWCount(n{-}1,m{-}2). \label{eq:recurrenceA}
\end{align}
\label{theo:recurrenceA}
\end{lemma}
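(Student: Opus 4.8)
The plan is to prove both parts by a direct combinatorial decomposition, conditioning on the final steps of each path. For the base case \eqref{eq:recurrenceAInitial}, observe that a path terminating at $(0,m)$ never advances in the $x$-direction, so every one of its steps must be vertical; since the \vc restriction forbids two consecutive vertical steps, and all vertical steps here would be mutually adjacent, such a path can consist of at most a single step. The three possibilities --- the empty path, a lone $\uparrow$, and a lone $\downarrow$ --- terminate respectively at $(0,0)$, $(0,1)$ and $(0,-1)$, each reached in exactly one way, and no other $(0,m)$ is reachable. This is precisely $\GMWCount(0,m) = \I{m\in\{-1,0,1\}}$.

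For the recurrence \eqref{eq:recurrenceA}, fix $n \geq 1$ and $m$, and consider any path $P$ of type $\GMWSet$ from $(0,0)$ to $(n,m)$. Since $n\ge 1$, $P$ contains at least one step from $\SVMS$, as these are the only steps that increase the $x$-coordinate. The key idea is to peel off the \emph{last} non-vertical step $s$ of $P$ together with whatever follows it. Because every step after $s$ is vertical and the \vc condition forbids consecutive vertical steps, there can be at most one step following $s$. Hence exactly one of two patterns occurs: either (i) $s$ is the final step of $P$, or (ii) $P$ ends with $s$ immediately followed by a single vertical step $v\in\{\uparrow,\downarrow\}$. In either case, deleting this suffix leaves a path of type $\GMWSet$ terminating in column $n-1$ (a prefix of a \vc path is again \vc).

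I would then enumerate the cases. In pattern (i), $s\in\{\rightarrow,\nearrow,\searrow\}$ lands at $(n,m)$ from $(n-1,m)$, $(n-1,m-1)$, or $(n-1,m+1)$ respectively, contributing $\GMWCount(n-1,m)+\GMWCount(n-1,m-1)+\GMWCount(n-1,m+1)$. In pattern (ii), the intermediate landing point of $s$ is $(n,m-1)$ when $v=\uparrow$ and $(n,m+1)$ when $v=\downarrow$; combining the three choices of $s$ with the two choices of $v$ gives six sub-cases whose predecessor heights in column $n-1$ are $m$, $m\pm1$, and $m\pm2$. Collecting the coefficient of each $\GMWCount(n-1,\cdot)$ across both patterns yields the multiplicities $1,2,3,2,1$ appearing in \eqref{eq:recurrenceA}.

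The one point requiring care --- and the main obstacle --- is confirming that suffix-removal is a genuine bijection, so that the decomposition neither double-counts nor omits any path. Injectivity is immediate, since the last non-vertical step and its optional trailing vertical step are determined by $P$. For surjectivity, one must verify that appending a non-vertical step, optionally followed by a vertical step, to an arbitrary column-$(n-1)$ path of type $\GMWSet$ always produces a valid \vc path: the appended non-vertical step may legally follow any preceding step, and a trailing vertical step legally follows that non-vertical step since the two do not form consecutive verticals. Thus every reverse construction is admissible and every forward decomposition is unique, confirming that the summation over both patterns reproduces \eqref{eq:recurrenceA} exactly.
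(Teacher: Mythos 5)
Your proof is correct and follows essentially the same route as the paper: the paper's one-line proof simply cites a figure enumerating all nine ways a \vc $\SVMW$ path can arrive at its terminal lattice point (a final step from $\SVMS$, optionally followed by one vertical step), which is exactly your last-non-vertical-step decomposition with coefficients $1,2,3,2,1$. Your write-up merely makes explicit the base case and the bijectivity of suffix removal that the paper leaves implicit in the figure.
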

\begin{proof}
Equation \eqref{eq:recurrenceA} is obtained by examining all of the ways in which a $\GMWSet$ path can terminate at a lattice point as illustrated in Figure \ref{fig:validarcs}.
\end{proof}
Such a recursive counting formula is common in lattice path enumeration and adjacent topics.  See, for instance, the work of Donaghey and Shapiro~\cite{donaghey} on the Motzkin triangle.

\begin{figure}[H]
\centering
    \def\svgwidth{0.4\textwidth}%
    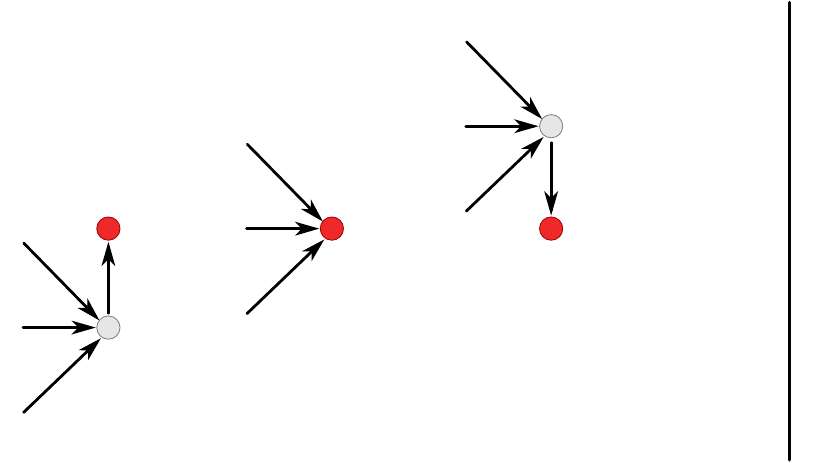%

\caption{All possible ways in which a vertically constrained path with steps from the set $\SVMW$ can terminate at a lattice point(indicated by red dot).}
\label{fig:validarcs}
\end{figure}

In a similar manner, we define $\GMWRSet$ as the set of partially directed, \vc lattice paths in the half-plane created from step vector $\SVMW$ in which the leading step is restricted to $\SVMS$. The number of paths that travel from the origin to point $(n,m)$, is represented by $\GMWRCount(n,m)$.  The initial condition is $\GMWRCount(0,m) = \I{m = 0}$.  Equation \eqref{eq:recurrenceA} can be rewritten for $\GMWRCount(n,m)$ by substituting $\GMWRCount(a,b)$ for $\GMWCount(a,b)$.

Paths that are restricted to the quarter-plane require special handling for positions close to the $x$-axis.  We define $\MWSet$ as the set of partially directed, \vc lattice paths restricted to the quarter-plane, created from step vectors $\SVMW$.  The count of $\MWSet$ paths terminating at $(n,m)$ is given by $\MWCount(n,m)$.
\begin{lemma}
The recurrence relation for $\MWCount(n,m)$ is
\begin{align}
\MWCount(0,m) &= \I{m \in \{0,1\}} \\
\MWCount(n,0) &= \MWCount(n{-}1,2)     + 2\MWCount(n{-}1,1)     + 2\MWCount(n{-}1,0) \label{eq:r1} \\
\MWCount(n,1) &= \MWCount(n{-}1,3)     + 2\MWCount(n{-}1,2)     + 3\MWCount(n{-}1,1) + 2\MWCount(n{-}1,0) \label{eq:r2}\\
\MWCount(n,m) &= \MWCount(n{-}1,m{+}2) + 2\MWCount(n{-}1,m{+}1) + 3\MWCount(n{-}1,m) \nonumber \\
       &\quad {+} \, 2\MWCount(n{-}1,m{-}1) + \MWCount(n{-}1,m{-}2). \label{eq:r3}
\end{align}
\label{theo:recurrenceAQ}
\end{lemma}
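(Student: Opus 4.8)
The plan is to carry out the same terminal-step analysis that underlies Lemma~\ref{theo:recurrenceA} and Figure~\ref{fig:validarcs}, but to track carefully which source points in column $n-1$ are forbidden once the walk is confined to $y \ge 0$. The organizing idea is to classify each $\MWSet$ path ending at $(n,m)$ (for $n \ge 1$) by its \emph{last} horizontally advancing step, i.e.\ the last occurrence of one of $\rightarrow,\nearrow,\searrow$. Since the only non-advancing steps are the vertical ones and two of these cannot be consecutive, after this final advancing step the path carries at most one additional step, necessarily vertical. This produces a mutually exclusive and exhaustive split: either (i) the last advancing step lands directly on $(n,m)$, or (ii) it lands on $(n,m-1)$ and is followed by $\uparrow$, or on $(n,m+1)$ and is followed by $\downarrow$. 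In every case the advancing step reaches its landing point $(n,y)$ from one of $(n-1,y),(n-1,y-1),(n-1,y+1)$ via $\rightarrow,\nearrow,\searrow$, and the portion of the path up to that source point is counted, with no extra restriction, by $\MWCount(n-1,\cdot)$ --- an advancing step may legally follow any step, so nothing about the history at column $n-1$ is lost.

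I would then collect contributions. For $m \ge 2$ all nine source points $(n-1,m\pm2),(n-1,m\pm1),(n-1,m)$ have nonnegative ordinate, so summing case~(i) with the $\uparrow$ and $\downarrow$ branches of case~(ii) reproduces the five coefficients $1,2,3,2,1$ of \eqref{eq:r3}, exactly matching the half-plane relation \eqref{eq:recurrenceA}. The content of the lemma is in the boundary rows. For $m=0$ I would discard the source points with negative ordinate: the $\nearrow$ into $(n,0)$ from $(n-1,-1)$ is impossible, and the entire $\uparrow$ branch disappears because its advancing step would have to land on $(n,-1)$; re-summing the survivors yields the three terms of \eqref{eq:r1}. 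For $m=1$ only a single contribution is lost, the $\nearrow$ into $(n,0)$ inside the $\uparrow$ branch, which leaves the four terms of \eqref{eq:r2}. Finally, $\MWCount(0,m)=\I{m\in\{0,1\}}$ follows by inspection: a path with no advancing step is either empty (ending at $(0,0)$) or the single step $\uparrow$ (ending at $(0,1)$), whereas a single $\downarrow$ exits the quarter-plane and two verticals are forbidden.

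The principal obstacle I anticipate is purely the boundary bookkeeping rather than any computation: one must confirm that the terminal vertical step in case~(ii) never implicitly routes the walk through a point of negative ordinate, and that the ``last advancing step'' decomposition is genuinely a partition of all paths. I would guard against slips by cross-checking the three recurrences against a small hand-computed table of $\MWCount(n,m)$ for $n,m$ small, verifying agreement with a direct enumeration of short \vc paths.
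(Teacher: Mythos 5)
Your proposal is correct and takes essentially the same approach as the paper: the paper's one-line proof invokes exactly this classification of all ways a walk can terminate at a lattice point (the terminal configurations of Figure \ref{fig:validarcs}), with the boundary rows $m=0$ and $m=1$ obtained by discarding the configurations that touch negative ordinates. Your version simply makes explicit the ``last advancing step plus at most one trailing vertical step'' partition and the boundary bookkeeping that the paper leaves to the reader.
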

This follows again from considering all ways in which a walk can end at a specific point.

The recurrence relation for the set of $\MWRSet$ paths, in which the leading step is restricted to $\SVMS$, has initial condition $\MWRCount(0,m) = \I{m = 0}$. Equations \eqref{eq:r1},~\eqref{eq:r2} and \eqref{eq:r3} can be rewritten for $\MWRSet$ paths by substituting $\MWRCount(a,b)$ for $\MWCount(a,b)$.

\section{An Explicit Bijection}
\label{sec:bijection}

\begin{table}[H]
\begin{center}
\begin{adjustbox}{max width=\textwidth}
\begin{tabular}{|l|rl|l|}
\hline
\multicolumn{1}{|c|}{Motzkin family}   & \multicolumn{2}{|c|}{Relationship}          & \multicolumn{1}{|c|}{OEIS~\cite{oeis}} \\ \hline \hline
\multirow{2}{4cm}{Motzkin Paths}       & $\MWRCount(n,0)$  &$= \MSCount(2n)$         & Bisection of A001006 (also A026945) \\ \cline{2-4}
                                       & $\MWCount(n,0)$   &$= \MSCount(2n{+}1)$     & Bisection of A001006 (also A099250) \\ \hline
\multirow{2}{4cm}{Grand Motzkin Paths} & $\GMWRCount(n,0)$ &$= \GMSCount(2n)$        & Bisection of A002426 (also A082758)\\ \cline{2-4}
                                       & $\GMWCount(n,0)$  &$= \GMSCount(2n{+}1)$    & Bisection of A002426 \\ \hline
\multirow{2}{4cm}{Motzkin Triangle}    & $\MWRCount(n,m)$  &$= \MT(2n, m)$           & Row Bisection of  A026300 \\ \cline{2-4}
                                       & $\MWCount(n,m)$   &$= \MT(2n{+}1,m)$        & Row Bisection of  A026300 \\ \hline
\multirow{2}{4cm}{Triangle of Trinomial Coefficients}      &$\GMWRCount(n,m)$ &$= \TT(2n, m)$    & Row Bisection of A027907 \\ \cline{2-4}
                                                           &$\GMWCount(n,m)$ &$= \TT(2n{+}1, m)$ & Row Bisection of A027907 \\ \hline
\end{tabular}
\end{adjustbox}
\end{center}
\caption{Correlation between \vc paths and their Motzkin counterparts.  Bisection means mapping to alternating (even or odd) indices or rows of a sequence.}
\label{table:summary2}
\end{table}

Table \ref{table:summary2} lists some well-known lattice path models.  Both Motzkin and Grand Motzkin paths use the $\SVMS$ step vectors and start and end on the $x$-axis; the distinction is that Motzkin paths are restricted to the quarter-plane while Grand Motzkin paths are restricted to the half-plane. The Motzkin triangle enumerates $\SVMS$ paths in the quarter-plane that terminate at a more general point $(x,y)$ and the Triangle of Trinomial Coefficients does the same for $\SVMS$ paths in the half-plane.

As indicated in Table \ref{table:summary2}, there is a relationship between the \vc $\SVMW$ lattice paths and a bisection of the Motzkin paths. We prove this relationship by providing an explicit bijection.

We define the mapping $\phi$, illustrated in Figure \ref{fig:map1},  in which the following substitutions are performed on alternating steps of the Motzkin path:
\begin{eqnarray*}
   \phi :& \step{1,1}  & \to  \step{0,1}\\
   \phi :& \step{1,-1} & \to  \step{0,-1}\\
   \phi :& \step{1,0}  & \to \text{remove step}
\end{eqnarray*}

\begin{figure}[H]
\centering
    \def\svgwidth{0.9\textwidth}%
    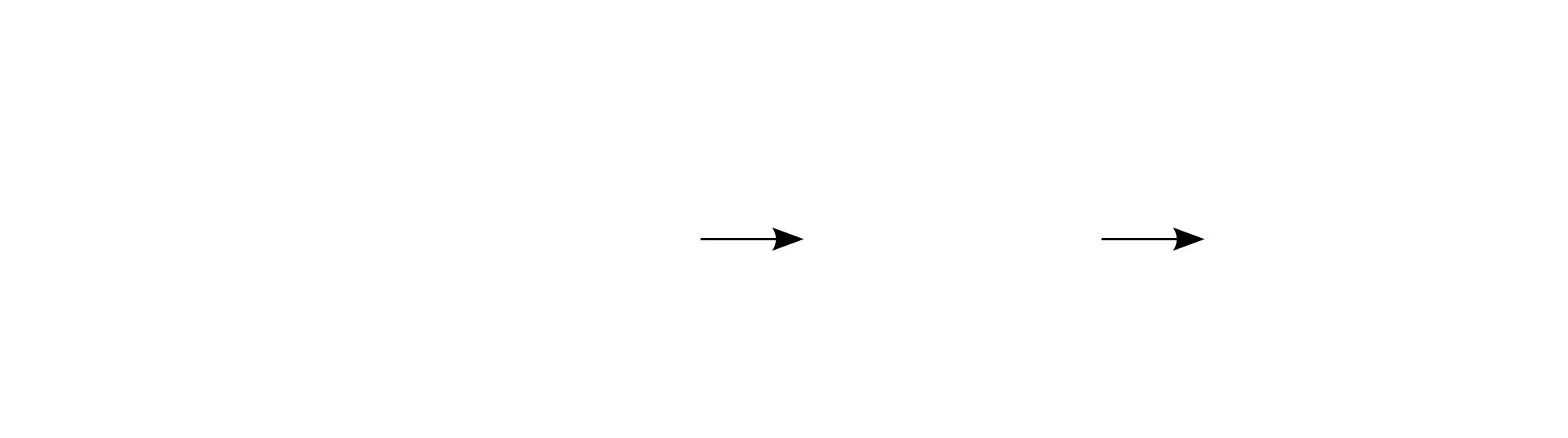%

\caption{Conversion of a Motzkin path ($n=7$) to an $\MWSet$ path ($n=3$).  Grey steps are modified using the substitution rules of $\phi$. Black steps are unchanged.}
\label{fig:map1}
\end{figure}

\begin{lemma}
The mapping $\phi$ applied to alternating steps starting at the \emph{first} step of a Motzkin (Grand Motzkin) path terminating at $(2n+1,m)$  produces a \vc path of type $\MWSet$ $(\GMWSet)$ terminating at $(n,m)$.
\label{theo:phi}
\end{lemma}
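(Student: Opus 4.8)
The plan is to treat $\phi$ as a local, position-indexed rewriting of the step sequence and to verify the four things required of the image: that it uses only $\SVMW$ steps, that it is \vc, that it terminates at $(n,m)$, and that it respects the relevant domain (the quarter-plane for $\MWSet$, the whole half-plane for $\GMWSet$). Write the Motzkin path as a word $s_1 s_2 \cdots s_{2n+1}$ over $\SVMS$, and recall that $\phi$ rewrites the odd-indexed letters $s_1, s_3, \dots, s_{2n+1}$ while leaving the even-indexed letters $s_2, s_4, \dots, s_{2n}$ untouched. The first observation I would record is that $\phi$ never changes the $y$-component of a step: $\step{1,1} \mapsto \step{0,1}$ and $\step{1,-1} \mapsto \step{0,-1}$ preserve the vertical displacement, and the only deleted steps are flat steps $\step{1,0}$, whose vertical displacement is already $0$. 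Consequently the heights visited by $\phi(s)$ are obtained from those visited by $s$ by deleting repetitions coming from removed flat steps, so the terminal height is preserved and equals the terminal height $m$ of the source path.

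For the terminal $x$-coordinate I would simply count horizontal contributions. After applying $\phi$, every surviving step is either an even-indexed (unchanged) $\SVMS$ step, contributing $+1$ to $x$, or the image of an odd-indexed step, which is vertical and contributes $0$ (flat odd steps are removed and contribute nothing). Among $s_1, \dots, s_{2n+1}$ there are exactly $n$ even-indexed positions, so the image terminates at $x = n$; combined with the height computation this gives termination at $(n,m)$, and it also shows every step of the image lies in $\SVMW = \SVMS \cup \{\step{0,1}, \step{0,-1}\}$.

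The heart of the argument is the \vc property, and this is the step I expect to require the most care, because deleting flat odd steps could a priori pull two vertical steps into adjacency. I would rule this out structurally: every vertical step of the image arises from an odd-indexed position, while every even-indexed step is left unchanged and is therefore a genuine $\SVMS$ step, hence neither vertical nor deleted. Since the positions $2, 4, \dots, 2n$ interleave the odd positions, any two vertical steps in the image are separated by at least one surviving even-indexed step, which is not vertical. Thus no two vertical steps are ever consecutive, so none of the forbidden patterns $\uparrow\uparrow$, $\downarrow\downarrow$, $\uparrow\downarrow$, $\downarrow\uparrow$ can occur, and the image is \vc.

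Finally I would address the domain and the leading step. Because the heights visited by $\phi(s)$ form a subsequence of those visited by $s$, and a Motzkin path stays weakly above the $x$-axis, the image stays in the quarter-plane, giving an $\MWSet$ path; for a Grand Motzkin path there is no such restriction and the image is a $\GMWSet$ path in the half-plane. For the leading step, position $1$ is odd and so is acted on by $\phi$: the image either begins with the vertical step obtained from $s_1$ or, if $s_1 = \step{1,0}$ is removed, begins with the unchanged even step $s_2 \in \SVMS$. In both cases the leading step is drawn from $\SVMW$, consistent with $\MWSet$ ($\GMWSet$) allowing a leading vertical step, which completes the verification that $\phi$ sends the stated Motzkin paths into the stated \vc classes.
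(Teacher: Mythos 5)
Your proposal is correct and follows essentially the same approach as the paper's proof: verify the image uses only $\SVMW$ steps, argue that modifying only alternating steps prevents consecutive vertical steps, count the horizontal collapse from $2n+1$ to $n$, and observe that heights are untouched so the terminal height $m$ and the quadrant restriction are preserved. Your explicit interleaving argument for why deleted flat steps cannot pull two vertical steps into adjacency is a careful filling-in of a point the paper asserts in a single sentence, but it is the same underlying argument rather than a different route.
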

\begin{proof}
Step vectors in the Motzkin paths belong to $\SVMS$ which is a subset of $\SVMW$.  All substituted step vectors in the mapping $\phi$ belong to $\SVMW$, therefore the resulting path contains only step vectors from $\SVMW$.  Only alternating steps of the Motzkin path are modified, therefore consecutive vertical step vectors cannot be introduced by $\phi$. The resulting path is thus a \vc path of type $\MWSet$ $(\GMWSet)$. At each substitution, the horizontal length of the path is decreased by $1$. The mapping starts with a path of length $2n+1$ and horizontally collapses $n+1$ steps resulting in a path of length $n$.  Only horizontal displacement is affected by the substitutions of $\phi$ leaving the height of vertices along the path unchanged.  Therefore, an operand restricted to one quadrant produces a result restricted to one quadrant and an operand that terminates at height $m$ produces a result that terminates at height $m$.
\end{proof}
\begin{lemma}
The mapping $\phi$ applied to alternating steps starting at the \emph{second} step of a Motzkin (Grand Motzkin) path terminating at $(2n,m)$ produces a path of type $\MWRSet$ $(\GMWRSet)$ (restricted leading step) terminating at $(n,m)$.
\label{theo:phi2}
\end{lemma}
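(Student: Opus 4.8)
The plan is to mirror the proof of Lemma~\ref{theo:phi} almost verbatim, changing only the bookkeeping that is forced by the parity shift (modification now begins at the second step rather than the first) and by the leading-step restriction. As in that proof, I would verify three things in turn: that the image is a legal \vc path built from $\SVMW$, that it terminates at the claimed point $(n,m)$, and that it has the restricted leading-step type demanded by $\MWRSet$ (respectively $\GMWRSet$).

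For the first point I would argue exactly as before. The substitution rules of $\phi$ only ever emit steps from $\SVMW$, and the unmodified Motzkin steps lie in $\SVMS \subseteq \SVMW$, so every step of the image is in $\SVMW$. To see that the image is \vc, note that $\phi$ now touches the even-indexed steps $2,4,\dots,2n$, so between any two modified steps there is at least one unmodified, odd-indexed step. Every unmodified step is a genuine Motzkin step and therefore has a nonzero horizontal component, so it is neither deleted nor turned vertical. Hence any vertical step $\step{0,1}$ or $\step{0,-1}$ in the image has non-vertical immediate neighbours, and whenever a modified $\step{1,0}$ is deleted the two steps it separated are both unmodified and thus non-vertical; no consecutive vertical pair can be created.

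For the terminal point I would count displacements just as in Lemma~\ref{theo:phi}: a Motzkin path ending at $(2n,m)$ has exactly $2n$ steps, of which the $n$ even-indexed ones are modified, each reducing the horizontal displacement by $1$, giving horizontal extent $2n-n=n$. Since each substitution preserves vertical displacement and each deletion removes a step of zero vertical displacement, the terminal height stays $m$; the same observation shows that a quarter-plane (Motzkin) operand yields a quarter-plane image and a half-plane (Grand Motzkin) operand yields a half-plane image.

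The one genuinely new ingredient, and where I would focus the argument, is the leading-step type: because modification begins at the \emph{second} step, the first step is left untouched, so it is a Motzkin step lying in $\SVMS$, hence non-vertical, and it survives as the leading step of the image. This places the image in $\MWRSet$ (quarter-plane) or $\GMWRSet$ (half-plane), as required. I expect no deep obstacle here; the only care needed is in writing the \vc verification under deletions (confirming that deleting an even-indexed step always brings together two unmodified, horizontal steps) and in checking the degenerate case $n=0$, where the Motzkin path is empty, $m=0$, and both sides reduce to the empty path.
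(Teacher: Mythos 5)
Your proposal is correct and takes essentially the same route as the paper, whose proof simply invokes the arguments of Lemma~\ref{theo:phi} and notes that starting at the second step leaves the leading step in $\SVMS$ and collapses $n$ steps horizontally. You spell out the \vc verification and the displacement count in more detail, but the substance is identical.
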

\begin{proof}
The same arguments used in Lemma \ref{theo:phi} apply.  Because the mapping starts with the second step of the Motzkin path, the first step of the resulting path belongs to $\SVMS$ and $n$ steps are collapsed horizontally.
\end{proof}

The inverse relationship, taking a \vc $\SVMW$ path to a Motzkin path, can be represented by the mapping $\psi$, illustrated in Figure \ref{fig:unmap1}, in which the following substitutions are performed on the vertical steps at each integral horizontal distance from the origin.

\begin{eqnarray*}
\psi : & \step{0,1}  & \to \step{1,1} \\
\psi : & \step{0,-1} & \to \step{1,-1} \\
\psi : & \text{no vertical step}  & \to \text{ insert } \step{1,0}
\end{eqnarray*}

\begin{figure}[H]
\centering
    \def\svgwidth{0.8\textwidth}%
    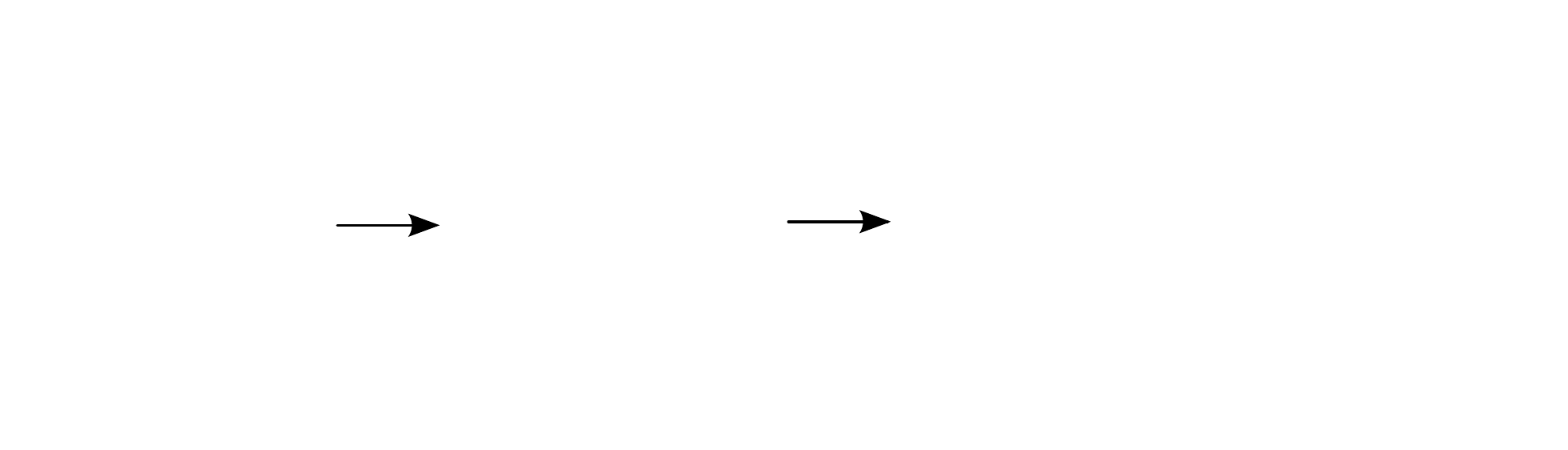%

\caption{Conversion of $\GMWSet$ path ($n=3$) to Grand Motzkin path ($n=7$)}
\label{fig:unmap1}
\end{figure}

\begin{lemma}
The mapping $\psi$ applied at a horizontal distance $i$ from the origin for all $0 \le i \le n$ to a \vc $\MWSet$ $(\GMWSet)$ path terminating at $(n,m)$  produces a Motzkin (Grand Motzkin) path terminating at $(2n+1,m)$.
\label{theo:psiA}
\end{lemma}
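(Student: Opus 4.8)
The plan is to verify directly that $\psi$ carries a \vc $\MWSet$ ($\GMWSet$) path to a path built only from Motzkin steps $\SVMS$, staying in the same vertical region and landing at $(2n+1,m)$. This parallels the argument given for $\phi$ in Lemma~\ref{theo:phi}, of which $\psi$ is the step-by-step inverse, so the verification splits into three checks: the step set, the region, and the terminal coordinate.

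First I would establish that $\psi$ is well defined, which is the one place the \vc hypothesis is essential. Because a vertical step leaves the $x$-coordinate unchanged and two vertical steps cannot be consecutive, the steps of the input sharing any fixed horizontal coordinate $i$ contain at most one vertical step. Hence ``the vertical step at horizontal distance $i$'' is unambiguous, and for each $i\in\{0,1,\dots,n\}$ exactly one substitution rule applies: convert the unique $\step{0,1}$ or $\step{0,-1}$ to $\step{1,1}$ or $\step{1,-1}$, or, if no vertical step sits at $x=i$, insert a single $\step{1,0}$.

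Next I would check the step set and the region together. Every non-vertical step of the input already lies in $\SVMS\subset\SVMW$ and is left untouched; every vertical step is replaced by a diagonal step of $\SVMS$ with the same vertical change; and each inserted step is $\step{1,0}\in\SVMS$. So the output uses only Motzkin steps, and since each such step advances $x$ by exactly $1$, the output is strictly monotone in the $x$-direction, i.e. a genuine (Grand) Motzkin path. Crucially, $\psi$ alters only horizontal displacement: the substitutions $\step{0,\pm 1}\to\step{1,\pm 1}$ preserve the height change and inserted horizontal steps preserve height entirely, so the multiset of vertex heights is unchanged. Consequently a quarter-plane $\MWSet$ path maps into the quarter-plane (a Motzkin path) and a half-plane $\GMWSet$ path maps into the half-plane (a Grand Motzkin path), with terminal height still $m$.

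Finally I would pin down the terminal horizontal coordinate by a counting argument, which I expect to be the only step with genuine content. Writing $h$, $d$, $v$ for the numbers of horizontal, diagonal, and vertical steps of the input, monotonicity gives $h+d=n$. By the well-definedness above the $v$ vertical steps occupy $v$ distinct coordinates among $\{0,\dots,n\}$, so $\psi$ converts them to $v$ diagonals and inserts horizontal steps at the remaining $(n+1)-v$ coordinates. The new horizontal displacement is therefore $(h+d)+v+\bigl((n+1)-v\bigr)=n+(n+1)=2n+1$, placing the endpoint at $(2n+1,m)$. Combining the three checks shows $\psi$ yields a (Grand) Motzkin path terminating at $(2n+1,m)$; observing that this reconstruction is precisely the inverse of $\phi$ then confirms $\psi=\phi^{-1}$ and closes the correspondence begun in Lemma~\ref{theo:phi}.
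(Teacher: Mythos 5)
Your proposal is correct and follows essentially the same route as the paper's proof: verify the output uses only $\SVMS$ steps, note that $\psi$ preserves vertex heights (hence region and terminal height $m$), and count that $n+1$ substitutions each add $1$ to the horizontal length, giving $2n+1$. Your explicit well-definedness check (that the \vc condition forces at most one vertical step per $x$-coordinate) and the $h,d,v$ bookkeeping merely spell out details the paper leaves implicit.
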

\begin{proof}
The mapping $\psi$ replaces all vertical steps with steps belonging to $\SVMS$. The resulting path is therefore a Motzkin or Grand Motzkin path. The substitutions do not affect the height of vertices along the path, therefore an input restricted to a quadrant produces an output restricted to a quadrant and an input that terminates at height $m$ produces an output that terminates at height $m$.  At each substitution, the horizontal length of the path is increased by $1$.  Since $n+1$ substitutions are performed on an operand of length $n$, the resulting path has length $2n+1$.
\end{proof}

\begin{lemma}
The mapping $\psi$ applied at a horizontal distance $i$ from the origin for all $0 < i \le n$ to a \vc $\MWRSet$ $(\GMWRSet)$ path terminating at $(n,m)$  produces a Motzkin (Grand Motzkin) path terminating at $(2n,m)$.
\end{lemma}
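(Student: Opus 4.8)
The plan is to reuse the argument of Lemma~\ref{theo:psiA} almost verbatim, since the mapping $\psi$ is defined identically here and only the set of horizontal positions at which it acts has changed. First I would check, exactly as in that lemma, that every application of $\psi$ either rewrites a vertical step as a step of $\SVMS$ or inserts a horizontal step from $\SVMS$; consequently the image uses only $\SVMS$ step vectors and is therefore a Motzkin (Grand Motzkin) path. I would then observe that none of these substitutions changes the height of any vertex along the path, so the vertical behaviour of the input is faithfully preserved: a \vc path confined to the quarter-plane maps to a path confined to the quarter-plane, and a path terminating at height $m$ maps to one terminating at height $m$.

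The only point requiring a separate argument is the horizontal length of the image. Here I would invoke the defining feature of $\MWRSet$ $(\GMWRSet)$: the leading step lies in $\SVMS$, so there is no vertical step at horizontal position $i=0$, and $\psi$ is accordingly applied only at the $n$ positions $i \in \{1,2,\dots,n\}$. Each such application increases the horizontal length by exactly $1$, so an input of horizontal length $n$ produces an output of horizontal length $n+n=2n$; combined with the height preservation above, the image terminates at $(2n,m)$.

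I expect the sole subtlety to be justifying the omission of the position $i=0$, which is precisely what separates this statement from Lemma~\ref{theo:psiA}. The justification is immediate from the restriction on the leading step: with no vertical step at $i=0$ there is nothing for $\psi$ to rewrite, and declining to insert a horizontal step at that position is exactly what yields a path of even length $2n$ rather than the odd length $2n+1$ obtained in the unrestricted case. With that observation the three properties above assemble into the claim, so the proof is genuinely a short variant of the preceding one.
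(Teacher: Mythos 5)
Your proof is correct and follows essentially the same route as the paper, which likewise reduces to Lemma~\ref{theo:psiA} and notes that starting the substitutions at $i=1$ yields $n$ applications of $\psi$ and hence a path of length $2n$. Your added remark explaining why omitting $i=0$ is legitimate (the restricted leading step lies in $\SVMS$, so no vertical step occurs there) is a sound elaboration of the same argument, not a different approach.
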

\begin{proof}
The proof for this lemma is similar to the proof for Lemma \ref{theo:psiA}. Because the substitutions begin at $i=1$, the number of substitutions is $n$ and the length of the resulting path is $2n$.
\end{proof}

\begin{theorem}
The map $\phi$ is a bijection from Motzkin (Grand Motzkin) paths of length $2n+1$ to \vc paths of type $\MWSet$ $(\GMWSet)$ of length $n$, with inverse $\psi$.
\label{theo:bijectionA}
\end{theorem}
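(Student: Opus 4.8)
The plan is to assemble the theorem from the four preceding lemmas, which already certify that $\phi$ and $\psi$ each land in the correct class with the correct length and terminal height; the only remaining task is to check that the two maps are mutually inverse. First I would record that Lemma~\ref{theo:phi} shows $\phi$ carries a Motzkin (Grand Motzkin) path of length $2n+1$ to a \vc path of type $\MWSet$ $(\GMWSet)$ of length $n$, preserving terminal height, while Lemma~\ref{theo:psiA} shows $\psi$ carries such a \vc path of length $n$ back to a Motzkin (Grand Motzkin) path of length $2n+1$, again preserving height. Hence both maps are well defined between the stated sets, and it suffices to prove $\psi \circ \phi = \mathrm{id}$ and $\phi \circ \psi = \mathrm{id}$.

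The key device is a normal form for the two path families. A \vc path of length $n$ is determined by interleaved data $v_0, h_0, v_1, h_1, \dots, h_{n-1}, v_n$, where each $v_i$ is the vertical step (one of $\uparrow$, $\downarrow$, or none, there being at most one by the \vc constraint) at horizontal distance $i$, and each $h_i \in \SVMS$ is the step advancing from distance $i$ to distance $i+1$. A Motzkin path of length $2n+1$, listed as $e_1, \dots, e_{2n+1}$, splits into its odd-indexed steps $e_1, e_3, \dots, e_{2n+1}$ and its even-indexed steps $e_2, e_4, \dots, e_{2n}$. I would make explicit the identification $e_{2i+1} \leftrightarrow v_i$ (for $0 \le i \le n$) and $e_{2i} \leftrightarrow h_{i-1}$ (for $1 \le i \le n$); this is exactly the bookkeeping that reconciles the ``alternating steps from the first step'' description of $\phi$ with the ``at each integral horizontal distance'' description of $\psi$.

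With the normal form fixed, the inverse relationship reduces to checking that the substitution tables invert one another entry-by-entry. The map $\phi$ sends $\step{1,1} \mapsto \step{0,1}$ and $\step{1,-1} \mapsto \step{0,-1}$ and deletes $\step{1,0}$, acting only on the odd slots; the map $\psi$ sends $\step{0,1} \mapsto \step{1,1}$ and $\step{0,-1} \mapsto \step{1,-1}$ and inserts $\step{1,0}$ in any odd slot carrying no vertical step, again leaving the even slots untouched. Thus $\psi \circ \phi$ restores each $e_{2i+1}$ from its image (a deleted horizontal step is reinserted precisely because its slot then holds no vertical step, and a vertical step is returned to its diagonal), while fixing every $e_{2i}$; the composition $\phi \circ \psi$ is symmetric. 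Since $\phi$ modifies only alternating steps it can never create consecutive vertical steps, so every intermediate object remains in its class, and the uniformity of the rules handles the quarter-plane and half-plane cases simultaneously.

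The obstacle I expect is bookkeeping rather than depth: because the $\step{1,0} \mapsto$ delete rule shortens the path, one must insist that ``alternating steps starting at the first step'' be read on the original indexing, so deletions do not reshuffle which steps count as odd, and one must verify that the reinsertion performed by $\psi$ refills exactly the deleted slots and no others. Taking the interleaved encoding $v_0, h_0, \dots, v_n$ as the primary object sidesteps this entirely, since then both compositions act slot-by-slot and the two identities follow immediately from the inverse substitution tables.
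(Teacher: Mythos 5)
Your proposal is correct, but it argues along a different line than the paper. The paper's proof never computes the compositions: it shows $\phi$ is injective (each step is either copied or uniquely remapped outside $\SVMS$), observes that $\psi$ is likewise an injection in the opposite direction, concludes bijectivity from the two opposing injections (implicitly using finiteness of the path sets of each fixed length), and then simply asserts that ``it is clear that its inverse is $\psi$.'' You instead prove the stronger statements $\psi \circ \phi = \mathrm{id}$ and $\phi \circ \psi = \mathrm{id}$ directly, via the interleaved normal form $v_0, h_0, v_1, \dots, h_{n-1}, v_n$ for \vc paths (at most one vertical step per horizontal abscissa, forced by the constraint forbidding $\uparrow\uparrow$, $\downarrow\downarrow$, $\uparrow\downarrow$, $\downarrow\uparrow$) matched slot-by-slot against the odd/even steps $e_{2i+1}$ and $e_{2i}$ of the Motzkin path. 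Your route costs more bookkeeping but buys two things the paper leaves implicit: it establishes the inverse claim rather than asserting it, and it does not rely on the double-injection-between-finite-sets inference, so it would survive in settings where finiteness is less immediate. It also cleanly reconciles the asymmetric phrasings of $\phi$ (``alternating steps in the original indexing'') and $\psi$ (``at each integral horizontal distance''), which is precisely where a careless reading of the paper's shorter argument could go astray; your observation that surjectivity of $\phi$ falls out of $\phi \circ \psi = \mathrm{id}$ combined with Lemma~\ref{theo:psiA}'s well-definedness of $\psi$ closes the loop. Both proofs lean on Lemmas~\ref{theo:phi} and~\ref{theo:psiA} for class, length, and height preservation, so the well-definedness layer is shared; the divergence is entirely in how bijectivity itself is certified.
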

\begin{proof}
First we prove that the mapping $\phi$ is an injection.  The input for $\phi$ is a Motzkin path.  Through $\phi$, each step vector in the input is either copied directly to the output or uniquely mapped to a step vector that is not an element in $\SVMS$.  The same destination path could not be generated by distinct inputs, therefore, $\phi$ is an injection from a Motzkin (Grand Motzkin) path to a \vc $\MWSet$ $(\GMWSet)$ path.

Similarly, $\psi$ is an injection from vertically constrained paths of type \vc $\MWSet$ $(\GMWSet)$ to the collection of Motzkin (Grand Motzkin) paths of length $2n+1$. This implies $\phi$ is a bijection, and it is clear that its inverse is $\psi$.
\end{proof}

\begin{theorem}
The map $\phi$ is a bijection from Motzkin (Grand Motzkin) paths of length $2n$ to \vc paths of type $\MWRSet$ $(\GMWRSet)$ and length $n$, with inverse $\psi$.
\end{theorem}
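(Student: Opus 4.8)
The plan is to mirror the proof of Theorem~\ref{theo:bijectionA} essentially verbatim, substituting the two lemmas appropriate to the restricted-leading-step setting. First I would invoke Lemma~\ref{theo:phi2}, which guarantees that applying $\phi$ to the alternating steps of a Motzkin (Grand Motzkin) path of length $2n$ starting at the \emph{second} step yields a \vc path of type $\MWRSet$ $(\GMWRSet)$ of length $n$; this shows that $\phi$ is a well-defined map between the two claimed sets. Dually, the lemma immediately following Lemma~\ref{theo:psiA} shows that applying $\psi$ at every horizontal distance $0 < i \le n$ to such a \vc path returns a Motzkin (Grand Motzkin) path of length $2n$, so $\psi$ is a well-defined map in the reverse direction.

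Next I would establish injectivity of each map exactly as in Theorem~\ref{theo:bijectionA}. For $\phi$, each step of the input is either copied directly or sent to a uniquely determined step vector that is not an element of $\SVMS$; since distinct inputs can never collapse to a common output, $\phi$ is injective. The identical reasoning applies to $\psi$: each vertical step of the \vc input has a unique horizontal image and each column lacking a vertical step is assigned a single $\step{1,0}$, so $\psi$ is injective as well.

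Finally, I would note that the substitution tables defining $\phi$ and $\psi$ are step-by-step inverses of one another, so that $\psi \circ \phi$ and $\phi \circ \psi$ are both the identity; combined with the two injectivity statements this proves that $\phi$ is a bijection with inverse $\psi$. The only point requiring care, and the sole genuine difference from Theorem~\ref{theo:bijectionA}, is the parity bookkeeping: here $\phi$ begins at the second step and $\psi$ ranges over $0 < i \le n$ rather than $0 \le i \le n$, so exactly $n$ (rather than $n+1$) steps are collapsed, yielding length $2n$ in place of $2n+1$. Confirming that this shifted alignment keeps the alternation of $\phi$ and the per-column action of $\psi$ in phase is the main thing to check, and it is precisely what Lemma~\ref{theo:phi2} and its $\psi$-counterpart were arranged to handle.
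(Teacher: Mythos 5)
Your proposal is correct and takes essentially the same route as the paper, whose entire proof is the remark that the result ``can be proven using the same approach as Theorem~\ref{theo:bijectionA}'': well-definedness via Lemma~\ref{theo:phi2} and its (unlabelled) $\psi$-counterpart, injectivity of both maps, and the observation that $\phi$ and $\psi$ invert each other. Your explicit attention to the shifted parity ($n$ collapsed steps rather than $n+1$, yielding length $2n$) is precisely the bookkeeping the paper delegates to those two lemmas, so nothing is missing.
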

\begin{proof}
This can be proven using the same approach as Theorem \ref{theo:bijectionA}.
\end{proof}

Figure \ref{fig:exampleMotzkin} shows that \vc paths with a restricted leading step are a subset of the unrestricted \vc paths of the same horizontal length.  By the bijective mapping $\phi$, this relationship also exits between even and odd length Motzkin paths.  If we prepend a horizontal step, Motzkin paths of length $2n$ form a unique subset of Motzkin paths of length $2n+1$.

\begin{figure}[H]
\centering
    \def\svgwidth{\textwidth}%
    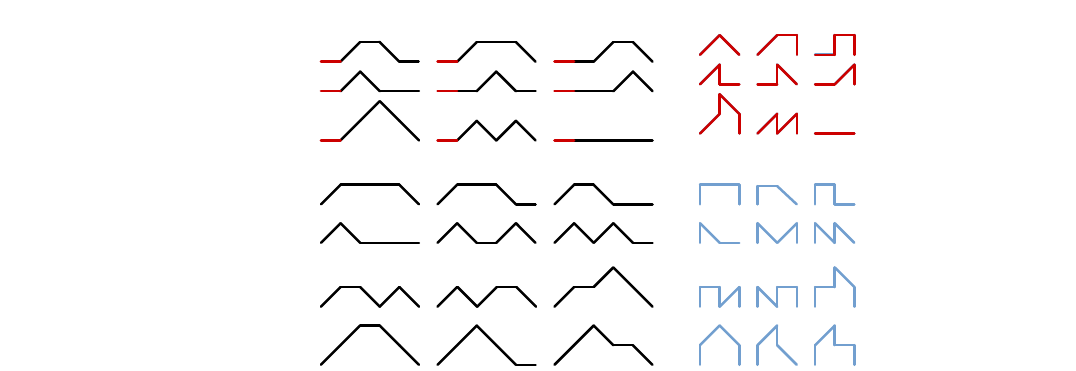%

\caption{Motzkin paths ($n=4$ and $n=5$) and corresponding $\MWRSet$ and $\MWSet$ paths ($n=2$).}
\label{fig:exampleMotzkin}
\end{figure}

\section{Generating Functions}
\label{sec:genfunction}

\begin{theorem}
For $\GMWSet$ and $\GMWRSet$, the generating functions are
\begin{align*}
\gmwr(x,y) &:= \sum_{n \ge 0} \sum_{m\in\mathbb{Z}} \GMWRCount(n,m) x^n y^m = \frac{y^2}{y^2{-}x \left(1 {+} y {+} y^2 \right)^2},\\
\gmw(x,y)  &:= \sum_{n \ge 0} \sum_{m\in\mathbb{Z}} \GMWCount(n,m) x^n y^m  = \frac{y\left(1{+}y{+}y^2\right)}{y^2{-}x \left(1 {+} y{+}y^2\right)^2},\\
\gmwr(x,0) &:= \sum_{n \ge 0} \GMWRCount(n,0) x^n = \frac{1}{P(x)}\sqrt{\frac{1{-}3x{+}P(x)}{2}},\\
\intertext{and}
\gmw(x,0)  &:= \sum_{n \ge 0} \GMWCount(n,0) x^n = \frac{1}{P(x)}\sqrt{\frac{1{-}3x{-}P(x)}{2x}}
\end{align*}
where
\[ P(x) := \sqrt{1{-}10x{+}9x^2}.\]
\end{theorem}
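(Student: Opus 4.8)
The plan is to pass from the two-variable recurrence to a functional equation in a transform variable $y$, solve that equation by summing a geometric series in $x$, and then extract the coefficient of $y^0$ to obtain the two diagonal generating functions. First I would set $G_n(y) = \sum_{m\in\mathbb{Z}}\GMWCount(n,m)\,y^m$, the $y$-transform of the $n$-th row. Multiplying the recurrence \eqref{eq:recurrenceA} by $y^m$ and summing over $m$, each shifted term $\GMWCount(n{-}1,m{-}k)$ contributes a factor $y^{k}$, so the five terms collapse into $G_n(y) = \bigl(y^{-2}+2y^{-1}+3+2y+y^2\bigr)G_{n-1}(y)$. Since $y^{-2}+2y^{-1}+3+2y+y^2 = y^{-2}(1+y+y^2)^2$, this reads $G_n(y) = y^{-2}(1+y+y^2)^2\,G_{n-1}(y)$. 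The initial conditions give $G_0(y) = y^{-1}(1+y+y^2)$ in the unrestricted case and $G_0^{R}(y)=1$ in the restricted case (where $G_n^R$ is built from $\GMWRCount$), so by iteration $G_n(y) = y^{-2n-1}(1+y+y^2)^{2n+1}$ and $G_n^{R}(y) = y^{-2n}(1+y+y^2)^{2n}$.

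Next I would sum over $n$. The series $\sum_{n\ge 0}x^n G_n(y)$ is geometric with ratio $xy^{-2}(1+y+y^2)^2$, and likewise for $\sum_{n\ge 0}x^n G_n^{R}(y)$. Evaluating these and clearing denominators by $y^2$ produces exactly the stated rational forms for $\gmw(x,y)$ and $\gmwr(x,y)$; this step is purely formal and routine.

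For the diagonal generating functions I would extract the coefficient of $y^0$ from the rational functions just found. Writing $D(x,y) := y^2 - x(1+y+y^2)^2$ and using the constant-term formula $\gmwr(x,0) = \frac{1}{2\pi i}\oint_{|y|=r} \frac{y}{D(x,y)}\,dy$ over a small circle of radius $r$ slightly larger than $\sqrt{x}$, the key structural observation is the factorization $D = \bigl(y-\sqrt{x}(1+y+y^2)\bigr)\bigl(y+\sqrt{x}(1+y+y^2)\bigr)$. Each factor is a quadratic in $y$ with exactly one root tending to $0$ as $x\to 0$; the product of all four roots of $D$ equals $1$, so these two ``small'' roots $\alpha,\beta$ are the only poles inside the contour. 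At a root $\alpha$ of the first factor one has $\alpha = \sqrt{x}\,(1+\alpha+\alpha^2)$, which makes the second factor evaluate to $2\alpha$ and collapses the residue to $\tfrac{1}{2}\,(1-2\sqrt{x}-3x)^{-1/2}$; the mirror computation at $\beta$ gives $\tfrac{1}{2}\,(1+2\sqrt{x}-3x)^{-1/2}$. Adding the residues yields $\gmwr(x,0)=\tfrac{1}{2}\bigl((1-2\sqrt{x}-3x)^{-1/2}+(1+2\sqrt{x}-3x)^{-1/2}\bigr)$. The analogous computation for $\gmw(x,0)=\frac{1}{2\pi i}\oint \frac{1+y+y^2}{D}\,dy$ uses that $1+\alpha+\alpha^2 = \alpha/\sqrt{x}$ and $1+\beta+\beta^2 = -\beta/\sqrt{x}$ at the two roots, producing the difference of the same two square roots divided by $2\sqrt{x}$.

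Finally I would simplify these symmetric expressions. Setting $a = 1-3x$, the product $(a-2\sqrt{x})(a+2\sqrt{x}) = a^2-4x = 1-10x+9x^2 = P(x)^2$, while $\bigl(\sqrt{a+2\sqrt{x}}\pm\sqrt{a-2\sqrt{x}}\bigr)^2 = 2a\pm 2P(x)$. Placing the two square roots over the common denominator $P(x)$ and applying these identities converts the sum into $\frac{1}{P(x)}\sqrt{\frac{1-3x+P(x)}{2}}$ and the difference into $\frac{1}{P(x)}\sqrt{\frac{1-3x-P(x)}{2x}}$, as claimed. I expect the main obstacle to be the diagonal extraction: correctly justifying the constant-term/residue representation (noting that $\gmw(x,0)$ denotes the coefficient of $y^0$, not the value at $y=0$), verifying that precisely the two small roots lie inside the contour, and carrying out the residue simplification. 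As an independent cross-check one can identify $\GMWRCount(n,0)=[y^{2n}](1+y+y^2)^{2n}$ and $\GMWCount(n,0)=[y^{2n+1}](1+y+y^2)^{2n+1}$ as the even- and odd-indexed central trinomial coefficients and recover the same closed forms from the even/odd bisection of $\sum_{k\ge0}[y^k](1+y+y^2)^k\,z^k = (1-2z-3z^2)^{-1/2}$ under $z=\sqrt{x}$.
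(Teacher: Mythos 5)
Your proposal is correct, and its two halves relate differently to the paper. For the bivariate series $\gmwr(x,y)$ and $\gmw(x,y)$ your row-transform argument --- multiplying the recurrence by $y^m$, recognizing the characteristic Laurent polynomial $y^{-2}(1+y+y^2)^2$, iterating to get $G_n^R(y)=y^{-2n}(1+y+y^2)^{2n}$ and $G_n(y)=y^{-2n-1}(1+y+y^2)^{2n+1}$, then summing the geometric series --- is exactly the ``expanding terms in the general summation'' that the paper invokes via Wilf, just written out explicitly. For the univariate series you genuinely diverge: the paper obtains $\gmwr(x,0)$ and $\gmw(x,0)$ by bisecting the classical Grand Motzkin generating function $(1-2z-3z^2)^{-1/2}$ at $z=\sqrt{x}$, leaning on the bijection of Section~\ref{sec:bijection} (the identities $\GMWRCount(n,0)=\GMSCount(2n)$, $\GMWCount(n,0)=\GMSCount(2n{+}1)$), whereas you extract the constant term in $y$ by residues, using the clean factorization $y^2-x(1+y+y^2)^2=\bigl(y-\sqrt{x}(1+y+y^2)\bigr)\bigl(y+\sqrt{x}(1+y+y^2)\bigr)$ so that each quadratic factor contributes one small root, with the relations $F_+(\alpha)=2\alpha$ and $1+\alpha+\alpha^2=\alpha/\sqrt{x}$ collapsing the residues to $\tfrac12(1\mp2\sqrt{x}-3x)^{-1/2}$; I verified these residues and the final surd simplification via $a^2-4x=P(x)^2$, and they reproduce the stated closed forms. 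What each route buys: the paper's bisection is shorter but depends on Theorem~\ref{theo:bijectionA} and the known Motzkin literature; your residue computation is self-contained, needs no bijection, and is in fact the very technique the paper deploys later for $\gdwr(x,0)$ in Theorem~\ref{theo:gfdw} (small roots of the kernel, Stanley's Proposition 6.1.8), so your proof unifies the two sections --- and your closing cross-check, identifying $\GMWRCount(n,0)$ and $\GMWCount(n,0)$ with even- and odd-indexed central trinomial coefficients, is precisely the paper's official argument. The only point to tighten is the contour justification (choice of radius relative to the convergence annulus of the Laurent expansion), which you correctly flag yourself.
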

\begin{proof}
The generating functions for $\gmwr(x,y)$ and $\gmw(x,y)$ were derived by expanding terms in general summation using the recurrence relations of Lemma \ref{theo:recurrenceA}. See, for example, Wilf~\cite[p.~5]{wilf}.  The expressions for $\gmwr(x,0)$ and $\gmw(x,0)$ were obtained by bisecting the well known generating function for Grand Motzkin paths~\cite{barcuccimotzkin} and simplifying.
\end{proof}

\begin{corollary}
For $\MWSet$ and $\MWRSet$, the generating functions are as follows:
\begin{align*}
\mwr(x,y) &:= \sum_{n \ge 0} \sum_{m \ge 0} \MWRCount(n,m) x^n y^m
            = \frac{R(x)+Q(x)+2}{\left(1{+}Q(x){-}\sqrt{x}(1{+}2y)\right)\left(1{+}R(x)+\sqrt{x}(1{+}2y)\right)},\\
\mw(x,y)  &:= \sum_{n \ge 0} \sum_{m \ge 0} \MWCount(n,m) x^n y^m
            = \frac{R(x)-Q(x)+2\sqrt{x}(1{+}2y)}{\sqrt{x}\left(1{+}Q(x){-}\sqrt{x}(1{+}2y)\right)\left(1{+}R(x)+\sqrt{x}(1{+}2y)\right)},\\
\mwr(x,0) &:= \sum_{n \ge 0} \MWRCount(n,0) x^n = \frac{1}{2x}\left(1 - \sqrt{ \frac{1{-}3x{+}P(x)}{2}}\right)\\
\intertext{and}
\mw(x,0)  &:= \sum_{n \ge 0} \MWCount(n,0) x^n = \frac{1}{2x}\left(\sqrt{\frac{1{-}3x{-}P(x)}{2x}} - 1\right)\\
\intertext{where}
Q(x) &:= \sqrt{1{-}2\sqrt{x}-3x}\\
R(x) &:= \sqrt{1{+}2\sqrt{x}-3x}\\
P(x) &:= Q(x)R(x) = \sqrt{1{-}10x{+}9x^2}
\end{align*}
\end{corollary}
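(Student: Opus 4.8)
The plan is to treat the two ``slices'' of each claimed identity separately: the full bivariate series $\mwr(x,y),\mw(x,y)$ by the kernel method applied to the quarter-plane recurrence of Lemma~\ref{theo:recurrenceAQ}, and the specializations $\mwr(x,0),\mw(x,0)$ by bisecting the classical Motzkin generating function, exactly as the preceding theorem obtained $\gmwr(x,0),\gmw(x,0)$ from the grand Motzkin series. Throughout I would use the three identities $Q(x)^2=1-2\sqrt x-3x$, $R(x)^2=1+2\sqrt x-3x$ and $Q(x)R(x)=P(x)$ to collapse the nested radicals into the stated forms.

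For the bivariate case I would first convert Lemma~\ref{theo:recurrenceAQ} into a functional equation. Writing $\mw(x,y)=\sum_{n,m\ge0}\MWCount(n,m)x^ny^m$ and summing the recurrences (treating the boundary rows $m=0,1$ as corrections to the generic five-term rule, where the only genuine correction is the change of the coefficient of $\MWCount(n{-}1,0)$ from $3$ to $2$ in \eqref{eq:r1}), I expect to reach
$$K(x,y)\,\mw(x,y)=y^2(1+y)-x(1+y)^2\,\mw(x,0)-xy\sum_{n\ge0}\MWCount(n,1)x^n,$$
with kernel $K(x,y)=y^2-x(1+y+y^2)^2$. The series $\mwr(x,y)$ satisfies the same equation with the inhomogeneous term $y^2(1+y)$ replaced by $y^2$, reflecting only the change of initial condition.

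The heart of the argument is the kernel method. The kernel factors over $\mathbb{Q}(\sqrt x)$ as $K(x,y)=\bigl(y-\sqrt x(1+y+y^2)\bigr)\bigl(y+\sqrt x(1+y+y^2)\bigr)$, and each factor is a quadratic in $y$ contributing exactly one root that vanishes at $x=0$; these two ``small'' roots $Y_1(x),Y_2(x)$ are where $Q(x)$ and $R(x)$ enter, since solving $\sqrt x\,y^2-(1-\sqrt x)y+\sqrt x=0$ and $\sqrt x\,y^2+(1+\sqrt x)y+\sqrt x=0$ produces the radicals $\sqrt{1-2\sqrt x-3x}=Q$ and $\sqrt{1+2\sqrt x-3x}=R$. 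Substituting $y=Y_1$ and $y=Y_2$ annihilates the left-hand side and yields two linear equations in the two unknown boundary series $\mw(x,0)$ and $\sum_n\MWCount(n,1)x^n$; I would solve this $2\times2$ system, substitute the result back into the functional equation, and simplify. I anticipate that the main obstacle is precisely this final simplification: verifying that $Y_1,Y_2$ are the correct power-series roots (and fixing the branches of $Q,R$ so that each is $1+O(\sqrt x)$), and then reducing the resulting expression---rational in $y,\sqrt x,Q,R$---to the compact factored denominators $\bigl(1+Q-\sqrt x(1+2y)\bigr)\bigl(1+R+\sqrt x(1+2y)\bigr)$ by means of $QR=P$. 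The same computation with the modified inhomogeneous term gives $\mwr(x,y)$.

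Finally, for the on-axis series I would bypass the kernel solution and bisect directly. By the bijection of Theorem~\ref{theo:bijectionA} (recorded in Table~\ref{table:summary2}) we have $\MWCount(n,0)=\MSCount(2n{+}1)$ and $\MWRCount(n,0)=\MSCount(2n)$, so with the Motzkin series $M(t)=\sum_n\MSCount(n)t^n=\tfrac{1}{2t^2}\bigl(1-t-\sqrt{1-2t-3t^2}\bigr)$ I get $\mwr(x,0)=\tfrac12\bigl(M(\sqrt x)+M(-\sqrt x)\bigr)$ and $\mw(x,0)=\tfrac{1}{2\sqrt x}\bigl(M(\sqrt x)-M(-\sqrt x)\bigr)$. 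Substituting $t=\pm\sqrt x$ turns the radical $\sqrt{1-2t-3t^2}$ into $Q$ and $R$ respectively, and the identities $Q+R=\sqrt{2(1-3x+P)}$ and $R-Q=\sqrt{2(1-3x-P)}$ (both obtained by squaring and using $QR=P$) deliver the two stated closed forms. This last step is routine once the sign of each square root is pinned down by comparing low-order coefficients.
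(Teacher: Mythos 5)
Your proposal is correct, but for the bivariate series it takes a genuinely different route from the paper. The paper's proof of this corollary is a one-liner: \emph{all four} generating functions, the bivariate ones included, are obtained by bisecting and simplifying the known generating functions of Barcucci et al.\ for the Motzkin triangle and for Motzkin paths, via the row-bisection correspondences $\MWRCount(n,m)=\MT(2n,m)$ and $\MWCount(n,m)=\MT(2n{+}1,m)$ recorded in Table~\ref{table:summary2}. You instead re-derive $\mwr(x,y)$ and $\mw(x,y)$ self-containedly by the kernel method applied to Lemma~\ref{theo:recurrenceAQ}, and your details check out: comparing the boundary rows to the generic five-term rule under zero-extension, row $m=1$ needs no correction and row $m=0$ contributes only the $3\to 2$ change, while the truncation of the $y^{-1},y^{-2}$ terms is exactly what produces the $-x(1{+}y)^2\,\mw(x,0)-xy\sum_n\MWCount(n,1)x^n$ boundary terms in your functional equation with kernel $K(x,y)=y^2-x(1{+}y{+}y^2)^2$; the factorization over $\mathbb{Q}(\sqrt x)$ is right, the two quadratics have discriminants $Q(x)^2$ and $R(x)^2$ with one small root each, and the stated denominator factors $1{+}Q{-}\sqrt x(1{+}2y)$ and $1{+}R{+}\sqrt x(1{+}2y)$ are, up to the units $\mp 2\sqrt x$, precisely $y-\bar Y_i$ for the two large roots, so the kernel solution must collapse to the claimed form once the small-root factors cancel (I verified the result reproduces $f_1(y)=2+2y+y^2$ at order $x$). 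Notably, this is the very strategy the paper itself deploys for the Dyck analogue in Theorem~\ref{theo:dw}, so your route trades dependence on the external bivariate Motzkin-triangle formula for an independent, algebra-heavier derivation that also yields the boundary series $\sum_n\MWCount(n,1)x^n$ as a by-product. For the on-axis specializations your method coincides with the paper's: the bisections $\mwr(x,0)=\tfrac12\bigl(M(\sqrt x)+M(-\sqrt x)\bigr)$ and $\mw(x,0)=\tfrac{1}{2\sqrt x}\bigl(M(\sqrt x)-M(-\sqrt x)\bigr)$, together with your identities $(Q{+}R)^2=2(1{-}3x{+}P)$ and $(R{-}Q)^2=2(1{-}3x{-}P)$ and the sign checks $Q{+}R>0$, $R{-}Q\sim 2\sqrt x$, deliver exactly the stated closed forms.
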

\begin{proof}
Generating functions are derived by bisecting and simplifying the generating functions for the corresponding Motzkin triangle and Motzkin path sequences, given by Barcucci et al.~\cite{barcuccimotzkin}.
\end{proof}

\section{Extension to other lattice paths}
\label{sec:extensions}
Our approach can be used to explore \vc counterparts to other well known lattice paths, such as Dyck and Schr\"{o}der paths shown in Figure \ref{fig:summary}.

\subsection{Dyck Paths with Vertical Steps}
\label{sec:dyck}

Dyck paths are composed from the step vectors $\SVDS = \{\step{1,1}, \step{1,-1}\}$.  We now consider \vc lattice paths with the vector step set $\SVDW = \{\step{1,1}$, $\step{1,-1}$, $\step{0,1}$, $\step{0,-1} \}$ . The four types of \vc $\SVDW$ paths are presented in Table \ref{table:dyckcases}.

\begin{table}[H]
\begin{center}
\begin{tabular}{|l|l|l|} \hline
Name    & Restricted to Q & Leading Step        \\ \hline \hline
$\GDWSet$  & False        & $e_1 \in \SVDW$   \\ \hline
$\GDWRSet$ & False        & $e_1 \in \SVDS$   \\ \hline
$\DWSet$   & True         & $e_1 \in \SVDW$   \\ \hline
$\DWRSet$  & True         & $e_1 \in \SVDS$   \\ \hline
\end{tabular}
\end{center}
\caption{\VC lattice paths with vector step set $\SVDW$.}
\label{table:dyckcases}
\end{table}

\subsubsection{Recurrence Relations}
\label{sec:dyckrecurrence}

\begin{figure}[H]
\centering
    \def\svgwidth{0.3\textwidth}%
    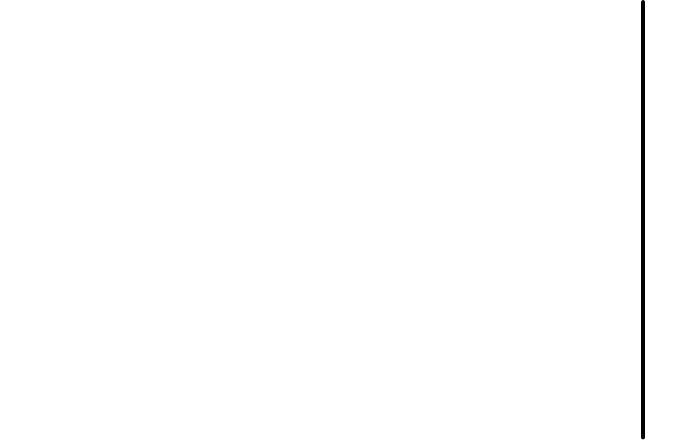%

\caption{All possible ways in which a \vc path using steps from the set $\SVDW$ can terminate at a lattice point (indicated by red dot).}
\label{fig:dyckrecursion}
\end{figure}

As before, we determine the recurrence relation by considering all ways the \vc $\SVDW$ paths can terminate at a lattice point.
\begin{lemma}
Let $\GDWCount(n,m)$ be the number of \vc paths in the half-plane created from step vectors $\SVDW$ and extending from $(0,0)$ to $(n,m)$.
The recurrence relation for $\GDWCount(n,m)$ is
\begin{eqnarray}
&\GDWCount(0,m) &= \I{m \in\{-1, 0, 1\}} \label{eq:gdw1} \\
&\GDWCount(n,m) &= \GDWCount(n{-}1,m{+}2)        + \GDWCount(n{-}1,m{+}1) + 2\GDWCount(n{-}1,m) \nonumber \\
&               &\quad{+}\, \GDWCount(n{-}1,m{-}1) + \GDWCount(n{-}1,m{-}2). \label{eq:gdw2}
\end{eqnarray}
\end{lemma}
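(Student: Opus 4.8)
The plan is to establish the recurrence by a direct combinatorial case analysis on the final step of a $\GDWSet$ path, exactly parallel to the argument sketched for Lemma~\ref{theo:recurrenceA} and summarized in Figure~\ref{fig:dyckrecursion}. First I would dispose of the base case \eqref{eq:gdw1}: a path of horizontal length $0$ consists of at most one step, and since consecutive vertical steps are forbidden, the only reachable points with $n=0$ are the origin itself (the empty path) and the two points $(0,\pm 1)$ reached by a single $\step{0,1}$ or $\step{0,-1}$; this gives exactly $\I{m \in \{-1,0,1\}}$.

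For the main recurrence \eqref{eq:gdw2} I would classify each $\GDWSet$ path ending at $(n,m)$ by its last step, which must come from $\SVDW = \{\step{1,1},\step{1,-1},\step{0,1},\step{0,-1}\}$. The two diagonal steps each increase $n$ by one, so removing a final $\step{1,1}$ leaves a path to $(n-1,m-1)$ and removing a final $\step{1,-1}$ leaves a path to $(n-1,m+1)$; each contributes a single term. The subtle cases are the vertical final steps, which do \emph{not} change $n$. A path ending in $\step{0,1}$ arrives at $(n,m)$ from $(n,m-1)$, but by the \vc constraint the step \emph{before} that vertical step cannot itself be vertical, so it must be one of the two diagonals reaching $(n,m-1)$; peeling off both steps yields a path to either $(n-1,m-2)$ or $(n-1,m)$. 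Symmetrically, a final $\step{0,-1}$ preceded by a diagonal peels back to $(n-1,m)$ or $(n-1,m+2)$. Collecting the contributions gives coefficient $1$ on $\GDWCount(n{-}1,m{+}2)$ and $\GDWCount(n{-}1,m{-}2)$ (each arising from a unique diagonal-then-vertical pattern), coefficient $1$ on $\GDWCount(n{-}1,m{+}1)$ and $\GDWCount(n{-}1,m{-}1)$ (from the plain diagonal steps), and coefficient $2$ on $\GDWCount(n{-}1,m)$ (one from an up-vertical preceded by a $\searrow$-diagonal, one from a down-vertical preceded by a $\nearrow$-diagonal), matching \eqref{eq:gdw2} exactly.

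The point requiring the most care is verifying that this case analysis is a genuine partition with no double counting, particularly for the vertical endings. I would argue that because a $\GDWSet$ path is a fixed sequence of steps, its last step is uniquely determined, and when that last step is vertical the immediately preceding step is likewise uniquely determined and is provably non-vertical by the \vc hypothesis, so the two-step suffix I strip off is unambiguous. Conversely every valid $(n-1)$-path extends in exactly the tallied ways without violating the constraint, since prepending the removed suffix never creates a consecutive vertical pair (the suffix's earlier step is a diagonal). Here one must also confirm that nothing in the half-plane setting obstructs these extensions: since $\GDWSet$ imposes no floor, appending or prepending any step is always legal, so no boundary corrections appear — unlike the quarter-plane case, this is why the recurrence holds uniformly for all $m \in \mathbb{Z}$ and the coefficient pattern $1,1,2,1,1$ is clean. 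This absence of boundary interaction is the reason the half-plane recurrence is the easy one to state first.
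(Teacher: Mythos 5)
Your proof is correct and takes essentially the same approach as the paper: the paper establishes \eqref{eq:gdw2} exactly by enumerating all ways a \vc $\SVDW$ path can terminate at a lattice point (Figure \ref{fig:dyckrecursion}) --- the two bare diagonal endings plus the four diagonal-then-vertical endings, giving the coefficient pattern $1,1,2,1,1$ --- which is precisely your last-step case analysis. Your explicit checks (that a final vertical step for $n\ge 1$ must be preceded by a uniquely determined diagonal, that re-attaching the suffix never creates a consecutive vertical pair, and the $n=0$ base case) merely spell out details the paper delegates to the figure.
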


For $\GDWRSet$, in which the first step is restricted to the set $\SVDS$, the initial condition is $\GDWRCount(0,m) = \I{m=0}$.  Equation \ref{eq:gdw2} can be rewritten for $\GDWRSet$ by substituting $\GDWRCount(a,b)$ for $\GDWCount(a,b)$.

The recurrence relation for $\DWSet$ and $\DWRSet$, paths restricted to the quarter-plane, is similar but with additional conditions concerning the first two rows of the triangle.
\begin{lemma}
Let $\DWCount(n,m)$ be the number of \vc paths in the quarter-plane created from step vectors $\SVDW$ and extending from $(0,0)$ to $(n,m)$.
The recurrence relation for $\DWCount(n,m)$ is
\begin{eqnarray}
&\DWCount(0,m) &= \I{m \in\{0, 1\}}, \nonumber \\[0.2cm]
\mathrlap{\text{otherwise, for $n>0$ and $m=0$ or $1$,}} \nonumber \\[0.2cm]
&\DWCount(n,0) &= \DWCount(n{-}1,2)     + \DWCount(n{-}1,1)     +  \DWCount(n{-}1,0),\label{eq:dw1} \\
&\DWCount(n,1) &= \DWCount(n{-}1,3)     + \DWCount(n{-}1,2)     + 2\DWCount(n{-}1,1) + \DWCount(n{-}1,0)\label{eq:dw2}\\[0.2cm]
\mathrlap{\text{ and for $n>0$, $m>1$,}} \nonumber \\[0.2cm]
&\DWCount(n,m) &= \DWCount(n{-}1,m{+}2) + \DWCount(n{-}1,m{+}1) + 2\DWCount(n{-}1,m) \nonumber \\
&       &\quad + \DWCount(n{-}1,m{-}1) + \DWCount(n{-}1,m{-}2). \label{eq:dw3}
\end{eqnarray}
\end{lemma}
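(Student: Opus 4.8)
\section*{Proof proposal}

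The plan is to count the \vc $\SVDW$ paths arriving at $(n,m)$ by classifying them according to their final step, exactly as in the proof sketched for the $\SVMW$ recurrence (Lemma~\ref{theo:recurrenceAQ}), but with the additional bookkeeping forced by the quarter-plane boundary. Such a path ends in one of the four step vectors of $\SVDW$: a $\step{1,1}$ step entering from $(n{-}1,m{-}1)$, a $\step{1,-1}$ step entering from $(n{-}1,m{+}1)$, a $\step{0,1}$ step entering from $(n,m{-}1)$, or a $\step{0,-1}$ step entering from $(n,m{+}1)$. The two diagonal endings advance the horizontal coordinate and so contribute $\DWCount(n{-}1,m{-}1)$ and $\DWCount(n{-}1,m{+}1)$ directly.

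The vertical endings are where the constraint does its work. First I would observe that whenever the final step is vertical, the preceding step cannot also be vertical, so it must be one of the two diagonal steps. Hence a path ending in $\step{0,1}$ at $(n,m)$ is a path reaching $(n,m{-}1)$ by a diagonal step followed by $\step{0,1}$, and counting the diagonal predecessors of $(n,m{-}1)$ gives $\DWCount(n{-}1,m{-}2)+\DWCount(n{-}1,m)$; symmetrically the $\step{0,-1}$ ending contributes $\DWCount(n{-}1,m)+\DWCount(n{-}1,m{+}2)$. Adding the four contributions yields \eqref{eq:dw3}, the coefficient $2$ on $\DWCount(n{-}1,m)$ arising precisely because both vertical endings draw a diagonal predecessor from height $m$.

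The main (and essentially only) obstacle is the behaviour along the bottom rows $m=0$ and $m=1$, where the quarter-plane restriction suppresses exactly those predecessor terms that would sit at height $-1$. For $m=0$ the $\step{1,1}$ ending and the entire $\step{0,1}$ ending become impossible, since each would require a lattice point at height $-1$; what remains is the $\step{1,-1}$ ending together with the $\step{0,-1}$ ending, whose diagonal predecessors lie at heights $0$ and $2$, and these give \eqref{eq:dw1}. For $m=1$ only the $\step{1,1}$-predecessor of the $\step{0,1}$ ending is lost, so the general formula loses only its $\DWCount(n{-}1,m{-}2)$ term (which at $m=1$ sits at height $-1$) and collapses to \eqref{eq:dw2}. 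Finally, the initial condition $\DWCount(0,m)=\I{m\in\{0,1\}}$ holds because at horizontal distance $0$ the only admissible paths are the empty path, ending at $(0,0)$, and a single $\step{0,1}$ step, ending at $(0,1)$: a lone $\step{0,-1}$ step would leave the quarter-plane, and two vertical steps would violate the constraint. Carefully confirming that no suppressed term is inadvertently reintroduced in these boundary rows completes the verification.
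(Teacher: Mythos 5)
Your proposal is correct and matches the paper's approach: the paper likewise derives the recurrence by enumerating all ways a \vc $\SVDW$ path can terminate at a lattice point (its Figure of terminal configurations encodes exactly your case analysis, with a vertical final step forced to be preceded by a diagonal one, yielding the coefficient $2$ and the suppressed boundary terms at $m=0,1$). Your write-up simply makes explicit the penultimate-step argument and the boundary bookkeeping that the paper leaves to the figure.
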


For $\DWRSet$, in which the first step is restricted to the set $\SVDS$, the initial condition is $\DWRCount(0,m) = \I{m=0}$.  Equations \eqref{eq:dw1}, \eqref{eq:dw2} and \eqref{eq:dw3} can be rewritten for $\DWRSet$ by substituting $\DWRCount(a,b)$ for $\DWCount(a,b)$.

\subsubsection{An Explicit Bijection}
\label{sec:dyckbijection}

\begin{table}[H]
\begin{center}
\begin{tabular}{|l|l|l|} \hline
Family                                                          & Relationship                             & OEIS~\cite{oeis}       \\ \hline \hline
\multirow{2}{4.2cm}{Motzkin, no flat steps at even indices}        & $\DWCount(n,0)  = \MFlatCount(2n+1,0)$   & Bisection of A214938  \\
                                                                &                                          &                       \\ \hline
\multirow{2}{4.2cm}{Motzkin, no flat steps at odd indices}       & $\DWRCount(n,0) = \MFlatCount(2n,0)$     & Bisection of A214938  \\
                                                                &                                          &                       \\ \hline
\multirow{2}{4.2cm}{Grand Motzkin, no flat steps at even indices}     & $\GDWCount(n,0)  = \GMFlatCount(2n+1,0)$ & Bisection of A026520  \\
                                                                &                                          &                       \\ \hline
\multirow{2}{4.2cm}{Grand Motzkin, no flat steps at odd indices}    & $\GDWRCount(n,0) = \GMFlatCount(2n,0)$   & Bisection of  A026520 \\
                                                                &                                          &                       \\ \hline
\multirow{2}{4.2cm}{Grand Motzkin, no flat steps at even indices}     & $\GDWCount(n,m)  = \GMFlatCount(2n+1,m)$ & \multirow{2}{4.2cm}{Column bisection of A026519}  \\
                                                                &                                          &                       \\ \hline
\multirow{2}{4.2cm}{Grand Motzkin, no flat steps at odd indices}    & $\GDWRCount(n,m) = \GMFlatCount(2n,m)$   & \multirow{2}{4.2cm}{Column bisection of  A026519} \\
                                                                &                                          &                       \\ \hline
\end{tabular}
\end{center}
\caption{Correlation between \vc paths of type $\SVDW$ and subsets of the Motzkin family of paths.}
\label{table:dyckoeis}
\end{table}

The $\DWSet$ paths correspond to OEIS sequence A214938 which is the number of Motzkin paths in which flat steps do not occur at even indices. The $\GDWSet$ paths correspond to the OEIS table A026519 which is a prototypical sequence contributed by Kimberling and defined by a recurrence relation.  We will show that it corresponds to a subset of Grand Motzkin paths with the same constraint that flat steps cannot occur at even indices.

\begin{theorem}
There is a bijection between the subset of Motzkin (Grand Motzkin) paths terminating at $(2n+1,m)$ that avoid flat steps at \textbf{even} indices  and the \vc paths of type $\DWSet$ $(\GDWSet)$ terminating at $(n,m)$.
\label{theo:bijectionDyck}
\end{theorem}
\begin{proof}
This bijection can be described using the same $\phi$ and $\psi$ mappings outlined in Section \ref{sec:bijection}.
Start with a Motzkin path (or Grand Motzkin path) of length $2n+1$ in which the step vector $\step{1,0}$ occurs only at \textbf{odd} indices.  The application of $\phi$ to the first step, $e_1$, and subsequent steps of odd index, removes all horizontal steps during the substitution phase and inserts non-consecutive vertical steps. The resulting path contains only steps found in $\SVDW$ and is of type $\DWSet$ or $\GDWSet$.  Inversely, if we start with a $\DWSet$ or $\GDWSet$ path and apply $\psi$, all vertical steps are removed and horizontal steps, when added, occur only at odd indices producing a Motzkin path (or Grand Motzkin path) of the desired subset.
\end{proof}
\begin{theorem}
There is a bijection between the subset of Motzkin (Grand Motzkin) paths terminating at $(2n,m)$ that avoid flat steps at \textbf{odd} indices and the \vc paths of type $\DWSet$ $(\GDWSet)$ terminating at $(n,m)$.
\end{theorem}
\begin{proof}
The proof of this theorem follows from that of Theorem \ref{theo:bijectionDyck}.  By applying $\phi$ to the second step and alternating steps thereafter, the first step cannot be vertical.  Similarly, $\psi$ will only insert horizontal steps at even indices.
\end{proof}

It is important to note that if we reverse a Motzkin path of even length with horizontal steps not allowed at \textbf{even} indices we obtain a Motzkin path with horizontal steps not allowed at \textbf{odd} indices.

\begin{figure}[H]
\centering
    \def\svgwidth{\textwidth}%
    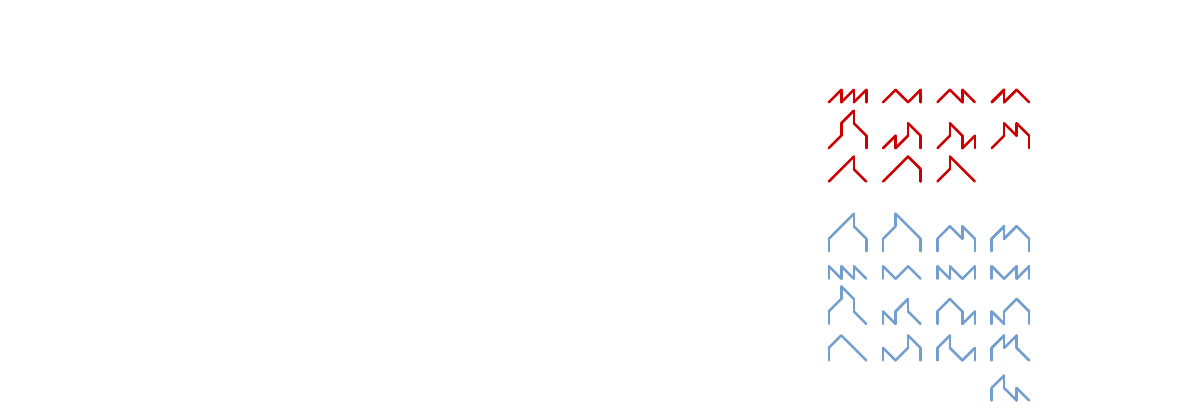%

\caption{Motzkin paths with flat steps not allowed at either odd or even indices and the corresponding $\DWRSet$ or $\DWSet$ paths respectively.}
\label{fig:exampleDyck}
\end{figure}

\subsubsection{Generating Functions}
\begin{theorem} \label{thm:GDW}
The bivariate generating functions for $\GDWRCount(n,m)$ and $\GDWCount(n,m)$ are
\begin{align*}
\gdwr(x,y) &:=  \sum_{n \ge 0} \sum_{m\in\mathbb{Z}} \GDWRCount(n,m) x^n y^m =  \frac{y^2}{y^2 {-} x\left(1{+}y^{2} \right)\left(1{+}y{+}y^{2} \right)},\\
\gdw(x,y)  &:=  \sum_{n \ge 0} \sum_{m\in\mathbb{Z}} \GDWCount(n,m) x^n y^m =  \frac{y\left(1{+}y{+}y^{2}\right)}{y^2 - x\left( 1 {+} y^2\right) \left(1{+}y{+}y^{2} \right)},\\
\gdwr(x,0) &:= \sum_{n \ge 0} \GDWRCount(n,0) x^n = \sqrt{\frac{2{-}7x{+}2\sqrt{1{-}8x{+}12x^2}}{4{-}31x{+}40x^2{+}12x^3}},\\
\intertext{and}
\gdw(x,0)  &:= \sum_{n \ge 0} \GDWCount(n,0) x^n = \sqrt{ \frac{2{-}4x{-}3x^2{-}2\sqrt{1{-}8x{+}12x^2}}{x\left(4{-}31x{+}40x^2{+}12x^3\right)}}
\end{align*}
\label{theo:gfdw}
\end{theorem}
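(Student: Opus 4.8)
The plan is to treat the two bivariate generating functions and the two on-axis series by different routes, since only the bivariate identities follow mechanically from the recurrence.

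First I would derive $\gdw(x,y)$ and $\gdwr(x,y)$ directly from the half-plane recurrence. Writing $F(x,y)=\gdw(x,y)$, I multiply \eqref{eq:gdw2} by $x^n y^m$ and sum over $n\ge 1$ and all $m\in\mathbb{Z}$. Because these paths range over the entire half-plane, \eqref{eq:gdw2} holds for every $m$ with no boundary corrections, so each shifted term $\GDWCount(n{-}1,m{-}k)$ contributes a single Laurent monomial $x\,y^{k}F(x,y)$. Collecting the five contributions produces the step polynomial $x\,(1{+}y^2)(1{+}y{+}y^2)/y^2$, whence $F-F_0 = x\,(1{+}y^2)(1{+}y{+}y^2)\,y^{-2}F$, where $F_0=\sum_m\GDWCount(0,m)y^m$ is read from \eqref{eq:gdw1}. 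Solving this linear equation with $F_0=(1{+}y{+}y^2)/y$ gives the claimed $\gdw(x,y)$; the identical computation with $F_0=1$ (the initial term $\I{m=0}$ for $\GDWRCount$) gives $\gdwr(x,y)$.

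Next I would extract the on-axis series as constant terms in $y$. The crucial observation is the substitution $u=y+y^{-1}$: dividing numerator and denominator by $y^2$ collapses both rational functions to functions of $u$ alone, namely $\gdwr(x,y)=1/(1-x\,u(u{+}1))$ and $\gdw(x,y)=(1{+}u)/(1-x\,u(u{+}1))$. Thus $\gdwr(x,0)=[y^0]\,y^2/D(y)$ with $D(y)=y^2-x(1{+}y^2)(1{+}y{+}y^2)$, a genuine diagonal extraction I would evaluate by residues: for small $x$ the quartic $D$ has exactly two roots inside the unit circle (both tending to $0$), and $[y^0]\,y^2/D(y)=\sum_i y_i/D'(y_i)$. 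To locate these roots I would use the $u$-factorization: the zeros of $1-x\,u(u{+}1)$ are $u_\pm=\tfrac12(-1\pm\sqrt{1{+}4/x})$, and for each $u_\pm$ exactly one root of $y^2-u_\pm y+1=0$ lies in the disk. Since the two in-disk roots are attached to $u_+$ and $u_-$ respectively, the residue sum is symmetric under $u_+\leftrightarrow u_-$ and hence a rational function of $u_++u_-=-1$ and $u_+u_-=-1/x$ together with the surviving irrationalities $\sqrt{u_\pm^2-4}$. The product $(u_+^2-4)(u_-^2-4)$ simplifies to $(1-8x+12x^2)/x^2$, which is precisely where the inner radical $\sqrt{1-8x+12x^2}$ of the stated formulas originates, and the cross term $2\sqrt{1-8x+12x^2}$ then appears exactly as in $\gdwr(x,0)$. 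The alternating even/odd bisection recorded in Theorem~\ref{theo:bijectionDyck}, $\GDWRCount(n,0)=\GMFlatCount(2n,0)$ versus $\GDWCount(n,0)=\GMFlatCount(2n{+}1,0)$, explains the extra factor of $1/x$ inside the radical for $\gdw(x,0)$.

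The bivariate step is a routine geometric-series manipulation and the residue setup is standard, so I expect the real work to be the final simplification. A priori the symmetric residue sum carries nested radicals in $\sqrt{x}$ and $\sqrt{1{+}4/x}$, and the task is to show it collapses to the single outer radicals $\sqrt{(2-7x+2\sqrt{1-8x+12x^2})/(4-31x+40x^2+12x^3)}$ and its companion. I would organize this by proving that $\gdwr(x,0)$ satisfies an explicit quadratic over $\mathbb{Q}(x,\sqrt{1-8x+12x^2})$---equivalently, that $\gdwr(x,0)^2$ equals the stated rational-plus-radical expression---reducing the claim to a polynomial identity that a computer algebra system can certify, cross-checked against the first several values of $\GDWCount(n,0)$ and $\GDWRCount(n,0)$ computed directly from \eqref{eq:gdw1}--\eqref{eq:gdw2}.
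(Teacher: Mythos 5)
Your proposal is correct and takes essentially the same route as the paper: the paper likewise obtains $\gdwr(x,y)$ and $\gdw(x,y)$ by summing the recurrence in the general summation (citing Wilf), and then extracts $\gdwr(x,0)$ and $\gdw(x,0)$ as the sum of residues of $\gdwr(x,y)/y$ (resp.\ $\gdw(x,y)/y$) at the two roots of the kernel $y^2 - x\left(1+y^2\right)\left(1+y+y^2\right)$ that vanish as $x \to 0$, whose existence it justifies via Stanley's Proposition 6.1.8. Your $u = y + y^{-1}$ symmetrization is a cleaner bookkeeping of that same residue computation---the paper instead writes the two small roots $r_1, r_2$ explicitly as nested radicals in $\sqrt{x(x+4)}$ and simplifies the residue sum directly, which is precisely where your quantities $u_\pm$ and $\sqrt{u_\pm^2 - 4}$ reappear in disguise.
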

\begin{proof}
The generating functions for $\gdwr(x,y)$ and $\gdw(x,y)$ were derived by expanding terms in the general summation~\cite[p.~5]{wilf} using the recurrence relations from Section \ref{sec:dyckrecurrence}.

To find $\gdwr(x,0)$, we isolate the terms of $\gdwr(x,y)$ that have no $y$ term. As $\gdwr(x,y)$ contains negative powers of $y$, one
cannot simply substitute $y=0$.  However, as $\gdwr(x,y)$ is a rational function we can divide by $y$ and compute the residues of the resulting function
in $y$ which approach the origin as $x$ approaches the origin:
\begin{align*}
\gdwr(x,0) = \sum_{i=1}^{n} Res\left(\frac{\gdwr(x,y)}{y}; \rho_i \right) 
\end{align*}
where ${\rho_1, ..., \rho_n}$ are the poles of $\frac{\gdwr(x,y)}{y}$ which approach the origin as $x$ approaches the origin.

Write $\gdwr(x,y)/y = G(x,y)/H(x,y)$ for co-prime polynomials $G$ and $H$.  There exist two singularities, roots $r_1$ and $r_2$ of $H(x,y)$, that are finite, and in fact vanish, when $x=0$:
\begin{align*}
r_1 &= \frac{   \sqrt{x(x{+}4)} {-} x {-} \sqrt{4x{-}14x^2{-}2x\sqrt{x(x{+}4)}}}{4x},\\
r_2 &= \frac{{-}\sqrt{x(x{+}4)} {-} x {+} \sqrt{4x{-}14x^2{+}2x\sqrt{x(x{+}4)}}}{4x}.
\end{align*}
The existence of these two roots follows from Proposition 6.1.8 of Stanley~\cite{stanley}, and their Puiseux expansions can determined implicitly using the Newton polygon of $H$. As $r_1$ and $r_2$ are simple poles of $H$,
\begin{align*}
Res\left( \frac{G(x,y)}{H(x,y)}; y=r_1\right) = \left.\frac{G(x,y)}{\frac{d{H(x,y)}}{d{y}}}\right|_{y=r_1} = \frac{\sqrt{2}}{\sqrt{4+x}\sqrt{2-7x-\sqrt{x(4+x)}}},\\
Res\left( \frac{G(x,y)}{H(x,y)}; y=r_2\right) = \left.\frac{G(x,y)}{\frac{d{H(x,y)}}{d{y}}}\right|_{y=r_2} = \frac{\sqrt{2}}{\sqrt{4+x}\sqrt{2-7x+\sqrt{x(4+x)}}}.
\end{align*}
The sum of these residues simplifies to give the final result, and the same approach can be used to find $\gdw(x,0)$.
\end{proof}

\begin{corollary}
The bivariate generating function for Grand Motzkin paths in which flat steps are not permitted at even indices is derived using the bisection relationship of Lemma \ref{theo:bijectionDyck}:
\begin{align*}
\gmflat(x,y) & = \gdwr(x^2,y)+x\gdw(x^2,y) = \frac{y^2 +xy(1+y+y^2)}{y^2-x^2(1+y^2)(1+y+y^2)}\\
\intertext{and}
\gmflat(x,0) & = \gdwr(x^2,0)+x\gdw(x^2,0) = \sqrt{ \frac{4{+}6x{-}11x^2{-}6x^3{-}3x^4{+}2x\sqrt{1{-}8x^2{+}12x^4}}{4{-}31x^2{+}40x^4{+}12x^6}}
\end{align*}
\end{corollary}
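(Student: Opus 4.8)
The plan is to establish the corollary in two essentially independent stages: a combinatorial decomposition that justifies writing $\gmflat(x,y)=\gdwr(x^2,y)+x\,\gdw(x^2,y)$, followed by an algebraic reduction of this combination using the closed forms already proved in Theorem~\ref{theo:gfdw}. Throughout, let $\GMFlatCount(N,m)$ denote the number of Grand Motzkin paths of length $N$ ending at height $m$ that carry no flat step at an odd index, so that $\gmflat(x,y)=\sum_{N,m}\GMFlatCount(N,m)x^Ny^m$. I would split this sum by the parity of $N$. The odd-length contribution is immediate from Theorem~\ref{theo:bijectionDyck}, which gives $\GMFlatCount(2n+1,m)=\GDWCount(n,m)$; these terms sum to $x\sum_{n,m}\GDWCount(n,m)(x^2)^ny^m=x\,\gdw(x^2,y)$.

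The even-length contribution carries the real content, since the companion bijection matches $\GDWRCount(n,m)$ with $F(2n,m)$, the number of even-length Grand Motzkin paths ending at height $m$ that avoid flat steps at \emph{even} indices rather than odd ones. To reconcile the two families I would use the reversal observation recorded just after that bijection: reversing a path of even length $2n$ sends index $i$ to $2n+1-i$ and therefore interchanges the even and odd positions, turning a no-flat-at-even path into a no-flat-at-odd path while negating its terminal height. This yields $F(2n,m)=\GMFlatCount(2n,-m)$. Composing with the vertical-reflection symmetry of Grand Motzkin paths, which preserves the no-flat-at-odd class and negates height, gives $\GMFlatCount(2n,-m)=\GMFlatCount(2n,m)$, so $\GDWRCount(n,m)=\GMFlatCount(2n,m)$ and the even-length terms sum to $\gdwr(x^2,y)$. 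This completes the decomposition.

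With the decomposition in place, the bivariate formula is routine. Substituting $x\mapsto x^2$ into the expressions of Theorem~\ref{theo:gfdw} shows that $\gdwr(x^2,y)$ and $\gdw(x^2,y)$ share the denominator $y^2-x^2(1+y^2)(1+y+y^2)$, so the combination collapses directly to $\bigl(y^2+xy(1+y+y^2)\bigr)\big/\bigl(y^2-x^2(1+y^2)(1+y+y^2)\bigr)$, as claimed.

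The genuine obstacle is the $y=0$ specialization, which demands that a sum of two nested radicals be rewritten as a single one. Writing $S=\sqrt{1-8x^2+12x^4}$ and $D=4-31x^2+40x^4+12x^6$, the substitution $x\mapsto x^2$ turns the claim into $\gmflat(x,0)=\tfrac{1}{\sqrt{D}}\bigl(\sqrt{2-7x^2+2S}+\sqrt{2-4x^2-3x^4-2S}\bigr)$. I would square this once, reducing the target identity to $\sqrt{(2-7x^2+2S)(2-4x^2-3x^4-2S)}=x(3-3x^2+S)$, and then square again; expanding both sides, each reduces to $10x^2-26x^4+21x^6+6x^2(1-x^2)S$, which settles the identity. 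The delicate bookkeeping is matching the cross terms proportional to $S$ against $S^2=1-8x^2+12x^4$; the choice of sign for each radical is pinned down by the positivity of the corresponding power series near $x=0$.
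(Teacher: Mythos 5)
Your proposal is correct and takes essentially the same route the paper intends: the corollary is stated there without proof, resting exactly on the parity bisection you derive from the two bijection theorems (with the post-theorem reversal remark reconciling flat-avoidance at even versus odd indices, a point you rightly treat carefully for general terminal height $m$) followed by substitution into the closed forms of Theorem~\ref{theo:gfdw}. Your double-squaring verification of the radical identity, with both sides reducing to $10x^2-26x^4+21x^6+6x^2(1-x^2)\sqrt{1-8x^2+12x^4}$ and signs fixed by positivity of the series near $x=0$, checks out and simply makes explicit the simplification the paper leaves to the reader.
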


\begin{theorem}
\label{theo:dw}
The generating functions for $\DWRCount(n,m)$ and $\DWCount(n,m)$ are
\begin{align*}
\label{eq:dw4}
\dwr(x,y) & :=  \sum_{n \ge 0} \sum_{m \ge 0} \DWRCount(n,m) x^n y^m \\
          & = \frac{(r_1{-}y)(r_2{-}y)}{(1{-}r_1 r_2)(y^2{-}x(1 + y^2)(1 + y + y^2))}, \\
\dw(x,y) & :=  \sum_{n \ge 0} \sum_{m \ge 0} \DWCount(n,m) x^n y^m \\
         & = \frac{r_1 r_2 y{-}(1{+}y{+}y^2)\left(x(r_1+r_2){-}xy(1{-}r_1 r_2)\right)}{x(1{-}r_1 r_2)(y^2{-}x(1 + y^2)(1 + y + y^2))}, \\
\dwr(x,0) &:= \sum_{n\ge0}\DWRCount(n,0)x^n = \frac{{-}r_1r_2}{x(1{-}r_1r_2)},\\
\intertext{and}
\dw(x,0) &:= \sum_{n\ge0}\DWCount(n,0)x^n = \frac{r_1{+}r_2}{x(1{-}r_1r_2)}.
\end{align*}
\end{theorem}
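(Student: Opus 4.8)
The plan is to treat all four formulas by the kernel method, in direct parallel with the half-plane computation behind Theorem~\ref{thm:GDW}: I would first convert the quarter-plane recurrences \eqref{eq:dw1}--\eqref{eq:dw3} into a functional equation whose kernel is exactly the denominator $y^2-x(1{+}y^2)(1{+}y{+}y^2)$, and then pin down the unknown boundary column-series by substituting the two small roots of that kernel which were already isolated in the proof of Theorem~\ref{thm:GDW}.

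First I would set up the functional equation. Writing $B_0(x)$ for $\dw(x,0)$ (resp.\ $\dwr(x,0)$) and $B_1(x):=\sum_{n\ge0}\DWCount(n,1)x^n$ (resp.\ for $\DWRCount$) for the two boundary columns, I would multiply the generic five-term recurrence \eqref{eq:dw3} by $x^n y^m$ and sum over $n\ge1$, $m\ge0$. The only row deviating from the generic pattern is $m=0$: the $m=1$ row \eqref{eq:dw2} already agrees with \eqref{eq:dw3} once the out-of-range term (a walk stepping to height $-1$) is dropped, while \eqref{eq:dw1} differs from the generic pattern by exactly $-\DWCount(n-1,0)$. The upward shifts attached to $y^{-2},y^{-1}$ meanwhile strip off the low-order contributions $B_0,B_1$. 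Using $y^{-2}{+}y^{-1}{+}2{+}y{+}y^2 = y^{-2}(1{+}y^2)(1{+}y{+}y^2)$ and clearing $y^2$, this collapses to
\[
F(x,y)\,K(x,y) = \iota(y) - x\bigl[(1{+}y{+}y^2)B_0(x) + yB_1(x)\bigr], \qquad K(x,y):=y^2-x(1{+}y^2)(1{+}y{+}y^2),
\]
where $F$ denotes $\dw$ or $\dwr$ and the inhomogeneous term $\iota(y)$ is $y^2(1{+}y)$ or $y^2$ according to the initial condition $\DWCount(0,m)=\I{m\in\{0,1\}}$ or $\DWRCount(0,m)=\I{m=0}$.

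Next comes the kernel step. The crucial fact, established in the proof of Theorem~\ref{thm:GDW}, is that $K(x,y)$ has exactly two roots $r_1(x),r_2(x)$ that are power series vanishing as $x\to0$. Because $\dw,\dwr$ are genuine power series in $x$ with polynomial coefficients in $y$, the product $F\cdot K$ is well defined and vanishes at $y=r_1,r_2$, so the right-hand numerator must vanish there too. This gives a $2\times2$ linear system in $B_0,B_1$, which I would solve by eliminating $B_1$. For $\dwr$ the elimination yields $\dwr(x,0)=-r_1r_2/\bigl(x(1{-}r_1r_2)\bigr)$ at once; for $\dw$ it yields $-r_1r_2(1{+}r_1{+}r_2)/\bigl(x(1{-}r_1r_2)\bigr)$. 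To match the stated compact form $\dw(x,0)=(r_1{+}r_2)/\bigl(x(1{-}r_1r_2)\bigr)$ I would exploit that $K$ is palindromic in $y$, so its four roots are $r_1,r_2,1/r_1,1/r_2$; Vieta on the $y^3$-coefficient gives $(r_1{+}r_2)(1{+}r_1r_2)=-r_1r_2$, which converts $-r_1r_2(1{+}r_1{+}r_2)$ into $r_1{+}r_2$.

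Finally, for the bivariate formulas I would feed the solved $B_0,B_1$ back into the kernel equation and factor. The cleanest observation is that the numerator is a polynomial in $y$ (quadratic for $\dwr$, cubic for $\dw$) vanishing at $r_1,r_2$, hence divisible by $(y-r_1)(y-r_2)$; for $\dwr$ the leading coefficient $1-xB_0$ simplifies to $1/(1{-}r_1r_2)$, giving directly $\dwr(x,y)=(r_1{-}y)(r_2{-}y)/\bigl((1{-}r_1r_2)K(x,y)\bigr)$, and the same bookkeeping on the cubic numerator produces the $\dw(x,y)$ formula. I expect the main obstacle to be entirely algebraic and twofold: correctly accounting for the single boundary correction (and confirming the $m=1$ row needs none) when assembling the functional equation, and then spotting the reciprocal/Vieta identity that collapses the raw solution into the advertised form; the analytic input—existence and power-series nature of the two small roots—may be imported verbatim from Theorem~\ref{thm:GDW}.
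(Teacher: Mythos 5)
Your proposal is correct and follows essentially the same route as the paper: term expansion of the quarter-plane recurrences into a functional equation with kernel $K(x,y)=y^2-x(1+y^2)(1+y+y^2)$ (with exactly the boundary corrections and inhomogeneous terms $y^2$, resp.\ $y^2+y^3$, that the paper records), followed by substitution of the two small roots $r_1,r_2$ imported from Theorem~\ref{thm:GDW} to solve a $2\times 2$ system in $\dwr(x,0)$ and the height-one series. The only difference is one of detail, not of method: the paper compresses the final algebra into ``can be solved explicitly,'' while you correctly make explicit the palindromy/Vieta identity $(r_1+r_2)(1+r_1r_2)=-r_1r_2$ and the divisibility of the numerator by $(y-r_1)(y-r_2)$ that yield the stated closed forms.
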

\begin{proof}
Generating functions for $\DWSet$ and $\DWRSet$ paths (restricted to the quarter-plane) were determined using a combination of term expansion and the \emph{kernel method}~\cite{BanderierBousquet-MelouDeniseFlajoletGardyGouyou-Beauchamps2002}.  Term expansion in the general summation gives
\begin{align}
\dwr(x,y) &= \frac{y^2{-}x\left(1{+}y{+}y^{2}\right)\dwr(x,0) {-} xy S(x)}{y^2 {-} x\left( 1 {+} y^2\right) \left(1{+}y{+}y^{2} \right)},\\
\dw(x,y) &= \frac{y^2 {+} y^3 {-} x\left(1 {+} y {+} y^2\right)\dw(x,0) {-}xy T(x)}{y^2 {-} x\left( 1 {+} y^2\right)\left(1{+}y{+}y^{2} \right)},
\end{align}
where
\[
S(x) := \sum_{n\ge0}\DWRCount(n,1)x^n \quad \text{and} \quad
T(x) := \sum_{n\ge0}\DWCount(n,1)x^n.  \]

The term `kernel method' refers to a collection of techniques for solving functional equations often appearing in lattice path problems. Clearing fractions in Equation~\eqref{eq:dw4} implies
\begin{equation} K(x,y) \dwr(x,y) = y^2{-}x\left(1{+}y{+}y^{2}\right)\dwr(x,0) {-} xy S(x), \label{eq:kernel} \end{equation}
where $K(x,y)$ is the \emph{kernel}
\[ K(x,y) = y^2 - x\left(1+y^2\right)\left(1+y+y^{2}\right). \]
As discussed in the proof of Theorem~\ref{thm:GDW}, there are two roots $r_1$ and $r_2$ of $K$ which vanish at $x=0$. Substituting $y=r_1$ and $y=r_2$ into Equation~\eqref{eq:kernel} gives two equations in two unknowns $\dwr(x,0)$ and $S(x)$, which can be solved explicitly.
The same approach can be used to find $\dw(x,y)$ and $\dw(x,0)$.
\end{proof}

The bijection with Motzkin paths in which flat steps are not permitted at even indices can then be used to obtain generating functions for this subset of Motzkin paths both in the general triangle case and for paths terminating on the $x$-axis.

\begin{theorem}
The univariate and bivariate generating functions for Motzkin paths terminating on the $x$-axis in which flat steps are not permitted at odd indices are
\begin{align*}
\mflat(x,y)  &:= \sum_{n \ge 0} \sum_{m \ge 0}\MFlatCount(n,0) x^n y^m\\
& = \frac{x (q_1 {-} y) (q_2 {-} y) + q_1 q_2 y - x^2 (q_1 {+} q_2 {-} y {+} q_1 q_2 y) (1 {+} y {+} y^2)}{x(1 {-} q_1 q_2) (y^2 {-} x^2 (1 {+} y^2) (1 {+} y {+} y^2))},
\intertext{and}
\mflat(x,0) &:= \sum_{n\ge0}\MFlatCount(n,0)x^n = \frac{x(q_1{+}q_2){-}q_1 q_2}{x^2(1{-}q_1 q_2)},\\
\intertext{where}
q_1 &:= -\frac{x - \sqrt{4 + x^2} + \sqrt{4{-}14x^2{-}2x\sqrt{4 + x^2}}}{4 x},\\
q_2 &:= -\frac{x + \sqrt{4 + x^2} - \sqrt{4{-}14x^2{+}2x\sqrt{4 + x^2}}}{4 x}.
\end{align*}
\label{theo:flat}
\end{theorem}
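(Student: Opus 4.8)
The plan is to obtain both formulas from the generating functions of $\DWSet$ and $\DWRSet$ paths in Theorem~\ref{theo:dw} via the bisection relationship supplied by the bijections of Section~\ref{sec:dyckbijection}, exactly as the preceding Grand Motzkin corollary derives $\gmflat$ from $\gdwr$ and $\gdw$. Concretely, Theorem~\ref{theo:bijectionDyck} gives $\MFlatCount(2n{+}1,m) = \DWCount(n,m)$, while its even-index counterpart (mapping even-length Motzkin paths to the restricted class $\DWRSet$, whose leading step cannot be vertical) gives $\MFlatCount(2n,m) = \DWRCount(n,m)$. Separating a power series in $x$ by the parity of its exponent then yields
\begin{align*}
\mflat(x,y) &= \dwr(x^2,y) + x\,\dw(x^2,y), & \mflat(x,0) &= \dwr(x^2,0) + x\,\dw(x^2,0).
\end{align*}
First I would record this bisection identity, noting that for $m=0$ the reversal observation following Theorem~\ref{theo:bijectionDyck} reconciles the odd-index and even-index subsets, so that $\mflat(x,0)$ genuinely counts axis-returning Motzkin paths avoiding flat steps at odd indices.

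The key observation is that the auxiliary functions $q_1,q_2$ in the statement are precisely the $x \mapsto x^2$ specializations of the kernel roots $r_1,r_2$ of Theorem~\ref{theo:dw}; that is, $q_i(x) = r_i(x^2)$. I would verify this by substituting $x \mapsto x^2$ into the closed forms for $r_1,r_2$ and extracting the factor $\sqrt{x^2(x^2{+}4)} = x\sqrt{x^2{+}4}$, working with the branch on which $r_1,r_2$ vanish at the origin (guaranteed by the Puiseux analysis in the proof of Theorem~\ref{thm:GDW}). Cancelling this common factor of $x$ against the $4x^2$ in the denominator converts each $r_i(x^2)$ into the stated $q_i$.

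With $q_i = r_i(x^2)$ in hand, the remainder is pure substitution. For the univariate formula, plugging $\dwr(x,0) = -r_1 r_2/\!\left(x(1{-}r_1 r_2)\right)$ and $\dw(x,0) = (r_1{+}r_2)/\!\left(x(1{-}r_1 r_2)\right)$ into the bisection identity and combining over the common denominator $x^2(1{-}q_1 q_2)$ gives $\left(x(q_1{+}q_2) - q_1 q_2\right)/\!\left(x^2(1{-}q_1 q_2)\right)$, which is the claimed $\mflat(x,0)$. For the bivariate formula I would substitute the rational forms of $\dwr(x,y)$ and $\dw(x,y)$ (with $x \mapsto x^2$, $r_i \mapsto q_i$) into $\mflat(x,y) = \dwr(x^2,y) + x\,\dw(x^2,y)$ and clear denominators over $x(1{-}q_1 q_2)\bigl(y^2 - x^2(1{+}y^2)(1{+}y{+}y^2)\bigr)$; the numerators assemble termwise, the $x\,\dw$ contribution supplying $q_1 q_2 y - x^2(q_1{+}q_2 - y + q_1 q_2 y)(1{+}y{+}y^2)$ and the $\dwr$ contribution supplying $x(q_1{-}y)(q_2{-}y)$, so that the stated numerator is matched with no further simplification.

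The main obstacle is conceptual rather than computational: one must set up the bisection identity correctly, since the odd-length bijection forbids flat steps at odd indices while the even-length bijection forbids them at even indices, and only the reversal symmetry makes these consistent in the axis-returning case. Checking $q_i = r_i(x^2)$ on the correct branch of the square roots is the other point requiring care. Once both are settled, the proof is a mechanical termwise match, and in particular no algebraic identity beyond the root substitution needs to be discovered.
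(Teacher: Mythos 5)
Your proposal is correct and follows essentially the same route as the paper: the paper's (very terse) proof likewise derives $\mflat(x,y)$ by combining $\dwr$ and $\dw$ through the bisection $\mflat(x,y)=\dwr(x^2,y)+x\,\dw(x^2,y)$ furnished by the bijections of Section~\ref{sec:dyckbijection}, with the $q_i$ being exactly the kernel roots $r_i(x^2)$ of Theorem~\ref{theo:dw}. The only (immaterial) difference is that the paper obtains $\mflat(x,0)$ by setting $y=0$ in the bivariate formula while you substitute $\dwr(x^2,0)+x\,\dw(x^2,0)$ directly, and you in fact spell out the reversal argument reconciling even-index and odd-index flat-step avoidance more carefully than the paper does.
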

\begin{proof}
The generating function $\mflat(x,y)$ is derived, as a consequence of the bijection relationship, by combining $\dwr(x,y)$ and $\dw(x,y)$. By setting $y=0$ in $\mflat(x,y)$, we obtain $\mflat(x,0)$.
\end{proof}
\subsection{Schr\"{o}der and Delannoy Paths with Vertical Steps}
\label{sec:schroder}

Delannoy paths are a composition of the step vectors $\SVCS = \{\step{1,1}$, $\step{1,-1}$, $\step{2,0}\}$.\footnote{Note: we take a non-traditional orientation of the Schr\"{o}der and Delannoy paths. Typically they are drawn with steps $\{\step{1,1}$, $\step{1,0}$ and $\step{0,1}\}$ travelling from the south-west to north-east corners of an $n\times n$ square, a $45^{\circ}$ rotation from our representation.}
Schr\"{o}der paths use the same step vectors but are restricted to the first quadrant.
Our \vc variant is created from the step set $\SVCW = \{\step{1,1}$, $\step{1,-1}$, $\step{2,0}$, $\step{0,2}$, $\step{0,-2}\}$ where the height of the vertical step vector is equal to the width of the horizontal step vector.  Table \ref{table:schrodercases} outlines the four cases.

\begin{table}[H]
\begin{center}
\begin{tabular}{|l|l|l|} \hline
Name  &Restricted to Q& Leading Step   \\ \hline \hline
$\GCWSet$  & False & $e_1 \in \SVCW$   \\ \hline
$\GCWRSet$ & False & $e_1 \in \SVCS$   \\ \hline
$\CWSet$   & True  & $e_1 \in \SVCW$   \\ \hline
$\CWRSet$  & True  & $e_1 \in \SVCS$   \\ \hline
\end{tabular}
\end{center}
\caption{\VC lattice paths with vector step set $\SVCW$.}
\label{table:schrodercases}
\end{table}

\subsubsection{Recurrence Relations}
\label{sec:schroderrecurrence}
\begin{figure}[H]
\centering
    \def\svgwidth{0.4\textwidth}%
    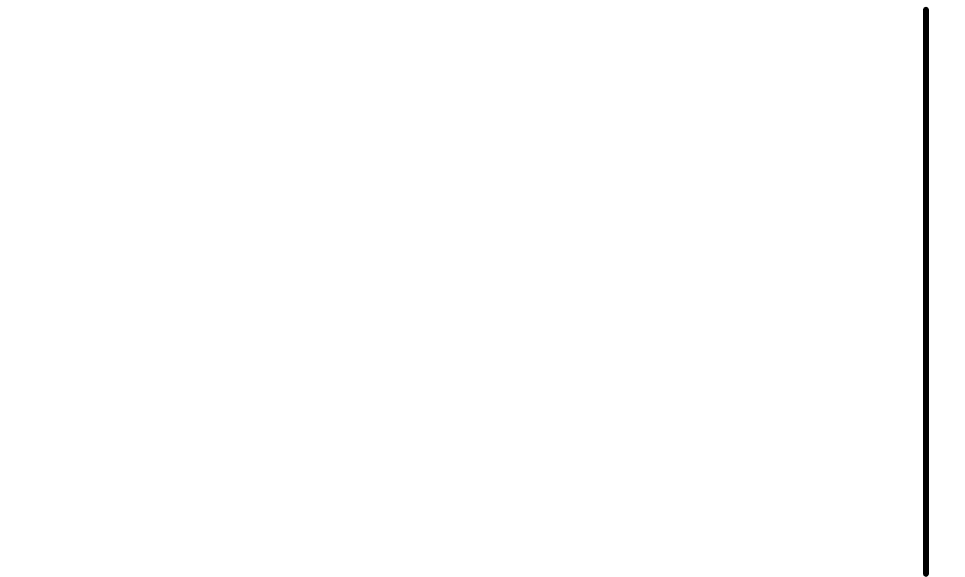%

\caption{All possible ways a \vc path can terminate at a lattice point (red dot) using step set $\SVCW$.}
\label{fig:schroderrecursion}
\end{figure}

Once again, the recurrence relation for $\GCWSet$ paths is determined by considering all ways in which the path can terminate at a lattice point as shown in Figure \ref{fig:schroderrecursion}.
\begin{lemma}
The recurrence relation for $\GCWCount(n,m)$ is
\begin{eqnarray}
&\GCWCount(0,m) &= \I{m\in\{-2, 0, 2\}} \text{ if $n=0$, otherwise}\nonumber \\
&\GCWCount(n,m) &= \GCWCount(n{-}1,m{+}3) + \GCWCount(n{-}2,m{+}2) + 2\GCWCount(n{-}1,m{+}1) + \GCWCount(n{-}2,m) \nonumber \\
&       &\quad{+}\, 2\GCWCount(n{-}1,m{-}1) + \GCWCount(n{-}2,m{-}2) + \GCWCount(n{-}1,m{-}3). \nonumber
\end{eqnarray}
\end{lemma}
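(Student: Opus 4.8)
The plan is to prove this recurrence exactly as the earlier half-plane recurrences (Lemma~\ref{theo:recurrenceA} and its analogues) were established: by partitioning the set of $\GCWSet$ paths ending at $(n,m)$ according to their final step, as depicted in Figure~\ref{fig:schroderrecursion}. The base case is immediate. When $n=0$ the path has made no horizontal progress, so every step must be vertical; since consecutive vertical steps are forbidden, at most one vertical step is available, giving exactly the three reachable endpoints $(0,-2)$, $(0,0)$, $(0,2)$, each attainable in a unique way. This is precisely $\GCWCount(0,m)=\I{m\in\{-2,0,2\}}$.

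For $n>0$ I would split on the last step vector, treating the non-vertical steps first. Three of the five step vectors are non-vertical, and each may freely terminate any valid $\GCWSet$ path: a final $\step{1,1}$ contributes $\GCWCount(n-1,m-1)$, a final $\step{1,-1}$ contributes $\GCWCount(n-1,m+1)$, and a final $\step{2,0}$ contributes $\GCWCount(n-2,m)$. The delicate cases are the two vertical final steps $\step{0,2}$ and $\step{0,-2}$, where the vertical constraint forbids the penultimate step from also being vertical, so one cannot simply count all paths reaching the starting point of that final vertical step.

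For these I would look back one additional step. A final $\step{0,2}$ arrives from $(n,m-2)$, and the constraint forces the step into $(n,m-2)$ to be non-vertical; hence it is a $\step{1,1}$, $\step{1,-1}$ or $\step{2,0}$, contributing $\GCWCount(n-1,m-3)+\GCWCount(n-1,m-1)+\GCWCount(n-2,m-2)$, and symmetrically a final $\step{0,-2}$ contributes $\GCWCount(n-1,m+3)+\GCWCount(n-1,m+1)+\GCWCount(n-2,m+2)$. These five families of paths are pairwise disjoint (they differ in the final step, and within each vertical case the penultimate non-vertical step enters from a distinct point), so they form a genuine partition. Summing the five cases and collecting like terms recovers the stated recurrence: the diagonal terms $\GCWCount(n-1,m\pm1)$ acquire coefficient $2$ because a diagonal step can be either the final step or the non-vertical step immediately preceding a vertical one, the horizontal step supplies $\GCWCount(n-2,m)$, and the look-back in the vertical cases supplies the remaining $\GCWCount(n-1,m\pm3)$ and $\GCWCount(n-2,m\pm2)$ terms.

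I expect the main obstacle to be the bookkeeping of the vertical constraint rather than any real difficulty. The only genuine subtlety is recognizing that a vertical final step must be enumerated through the non-vertical steps feeding into its starting point, so that the naive contribution $\GCWCount(n,m-2)+\GCWCount(n,m+2)$ is replaced by the three-term look-back. This single observation is exactly what excludes the forbidden $\uparrow\uparrow$ and $\downarrow\downarrow$ (and the self-intersecting $\uparrow\downarrow$, $\downarrow\uparrow$) configurations, and it is what produces both the mixed $n-1$/$n-2$ structure and the doubled diagonal coefficients in the final formula.
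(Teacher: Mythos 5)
Your proposal is correct and takes essentially the same approach as the paper: the paper's (very terse) proof likewise derives the recurrence by enumerating all terminal configurations of a path at a lattice point (Figure~\ref{fig:schroderrecursion}), with a vertical final step necessarily preceded by one of the three non-vertical steps --- exactly your three-term look-back, which produces the mixed $n{-}1$/$n{-}2$ structure and the doubled diagonal coefficients. Your write-up simply makes explicit the case analysis the paper delegates to the figure, together with the implicit convention $\GCWCount(p,m)=0$ for $p<0$.
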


The recurrence relation for $\CWSet$ (restricted to the quarter-plane) is very similar but with additional conditions concerning steps near the $x$-axis.
\begin{lemma}
The recurrence relation for $\CWCount(n,m)$ is
\begin{eqnarray}
&\CWCount(p,m) &= 0 \text{ if } p<0 \nonumber \\
&\CWCount(0,m) &= \I{m \in\{0, 2\}}\nonumber \\
&\CWCount(n,0) &= \CWCount(n{-}1,3)    + \CWCount(n{-}2,2)   + 2\CWCount(n{-}1,1)   + \CWCount(n{-}2,0)\nonumber \\
&\CWCount(n,1) &= \CWCount(n{-}1,4)    + \CWCount(n{-}2,3)   + 2\CWCount(n{-}1,2)   + \CWCount(n{-}2,1) +  \CWCount(n{-}1,0)\nonumber \\
&\CWCount(n,2) &= \CWCount(n{-}1,5)    + \CWCount(n{-}2,4)   + 2\CWCount(n{-}1,3)   + \CWCount(n{-}2,2) + 2\CWCount(n{-}1,1) \nonumber \\
&              &\;\; +\, \CWCount(n{-}2,0)\nonumber \\
&\CWCount(n,m) &= \CWCount(n{-}1,m+3)  + \CWCount(n{-}2,m+2) + 2\CWCount(n{-}1,m+1) + \CWCount(n{-}2,m) \nonumber \\
&        &\quad{+}\, 2\CWCount(n{-}1,m-1)+ \CWCount(n{-}2,m-2) +  \CWCount(n{-}1,m-3).\nonumber
\end{eqnarray}
\end{lemma}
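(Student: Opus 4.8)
The plan is to establish every line of the recurrence by the same \emph{last-step} analysis used for the earlier step sets, classifying each \vc path that ends at $(n,m)$ by its final step and, when that step is vertical, also by the step immediately preceding it. The feature that shapes the recurrence is that the vertical vectors $\step{0,2}$ and $\step{0,-2}$ leave the horizontal coordinate unchanged; so to obtain a relation that decreases $n$ one must, in the vertical case, peel off \emph{two} steps at once. The \vc hypothesis is exactly what makes this clean: since vertical steps cannot be consecutive, the step preceding a final vertical step is forced to be one of the three non-vertical vectors $\step{1,1}$, $\step{1,-1}$, $\step{2,0}$.

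First I would treat the generic interior rows $m>2$, where the quarter-plane floor is inactive, and split into two cases. In Case~A the final step is non-vertical and contributes $\CWCount(n{-}1,m{-}1)$, $\CWCount(n{-}1,m{+}1)$, and $\CWCount(n{-}2,m)$ from $\step{1,1}$, $\step{1,-1}$, and $\step{2,0}$ respectively; here the prefix may end in any step, since a non-vertical step may legally follow anything. In Case~B the final step is vertical: a terminal $\step{0,2}$ arrives from $(n,m{-}2)$, whose non-vertical predecessor originates at $(n{-}1,m{-}3)$, $(n{-}1,m{-}1)$, or $(n{-}2,m{-}2)$, and symmetrically a terminal $\step{0,-2}$ contributes from $(n{-}1,m{+}1)$, $(n{-}1,m{+}3)$, and $(n{-}2,m{+}2)$. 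Summing the three Case~A and six Case~B contributions and collecting like terms reproduces the displayed seven-term recurrence; the unit-width diagonals produce the $n{-}1$ arguments and the width-two horizontal step produces the $n{-}2$ arguments, while the coefficient $2$ on $\CWCount(n{-}1,m{+}1)$ and $\CWCount(n{-}1,m{-}1)$ appears precisely because each of those source points is reached once in Case~A and once in a Case~B vertical configuration.

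Next I would obtain the boundary rows $m=0,1,2$ by repeating the identical enumeration and discarding every source point of negative height, since such paths would leave the quarter-plane. For $m=0$ this removes the $\step{1,1}$ term of Case~A and the entire $\step{0,2}$ sub-case of Case~B; for $m=1$ it removes only the $\step{0,2}$ sub-case; for $m=2$ it removes only the $\step{1,1}$ choice for the predecessor of a final $\step{0,2}$. The surviving contributions in each row are exactly the displayed equations for $\CWCount(n,0)$, $\CWCount(n,1)$, and $\CWCount(n,2)$. The conventions $\CWCount(p,m)=0$ for $p<0$ and the initial row $\CWCount(0,m)=\I{m\in\{0,2\}}$ then record, respectively, that out-of-range arguments contribute nothing and that with no horizontal progress the only reachable heights are $0$ (the empty path) and $2$ (a single leading $\step{0,2}$).

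The hard part is not any individual computation but verifying that Cases~A and~B are genuinely disjoint and jointly exhaustive, so that no path is double-counted or omitted. This follows because a path is determined by its final step, and when that step is vertical it is further determined by its penultimate step; the \vc condition forbids that penultimate step from being vertical, so it ranges over exactly the three non-vertical vectors and cannot coincide with the non-vertical final step of Case~A. The most error-prone part is the low-$m$ bookkeeping, where one must check for each special row precisely which of the nine candidate source points has negative height; tabulating those nine points against the constraint that the height stay $\ge 0$ removes any ambiguity and yields the three boundary equations.
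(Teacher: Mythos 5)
Your proposal is correct and takes essentially the same approach as the paper, which derives this recurrence by enumerating all ways a \vc $\SVCW$ path can terminate at a lattice point (peeling off the forced non-vertical predecessor when the final step is vertical, as in its figure of terminal configurations) and then adjusting the boundary rows $m=0,1,2$ by discarding configurations that dip below the $x$-axis. One small wording caution: in rows $m=0,1$ the discarded $\step{0,2}$ sub-case is detected by the negative height of the \emph{intermediate} point $(n,m-2)$ of the two-step peel rather than of an origination point (e.g.\ for $m=1$ the source $(n-1,0)$ is legal but the peeled path would pass through $(n,-1)$), a distinction your displayed equations do correctly account for even though your stated rule mentions only negative-height source points.
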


\subsubsection{An Explicit Bijection}
The mappings $\phi$ and $\psi$ do not apply to the \vc variants of the Schr\"{o}der-Delannoy paths.  In $\SVCW$, the $\rightarrow$ step vector has twice the horizontal displacement of the $\nearrow$ and $\searrow$ step vectors, a property that causes substitutions of the type used in $\phi$ and $\psi$ to produce paths of varying length.  The OEIS does not contain entries for objects equinumerous with members of the $\SVCW$ path family.

There does, however, exist a bijection to a subset of Schr\"{o}der-Delannoy paths consisting of paths that are smooth at every third column. By \emph{smooth} at a column $c_i$, we mean that the path does not change direction at the integer $x$-position $c_i$.  This can occur either because the edges before and after $x=c_i$ have the same step vector, or, because $x=c_i$ intersects a midpoint of an edge that spans multiple columns.  Examples of these paths are shown in Figure \ref{fig:exampleSchroder}.  For conciseness, we shall refer to this subset of Schr\"{o}der/Delannoy paths as \ssd paths.

We define a mapping $\gamma$, illustrated in Figure \ref{fig:schrmap}, in which the following substitutions are performed on a \ssd path to either a single step or a pair of steps centered at a regular interval of every 3 columns on the integer lattice:
\begin{eqnarray*}
   \gamma :& \step{1,1}\step{1,1}  & \to  \step{0,2}\\
   \gamma :& \step{1,-1}\step{1,-1} & \to  \step{0,-2}\\
   \gamma :& \step{2,0}\step{2,0} & \to  \step{2,0}\\
   \gamma :& \step{2,0}  & \to  \text{ remove step}\\
\end{eqnarray*}

\begin{figure}[H]
\centering
    \def\svgwidth{\textwidth}%
    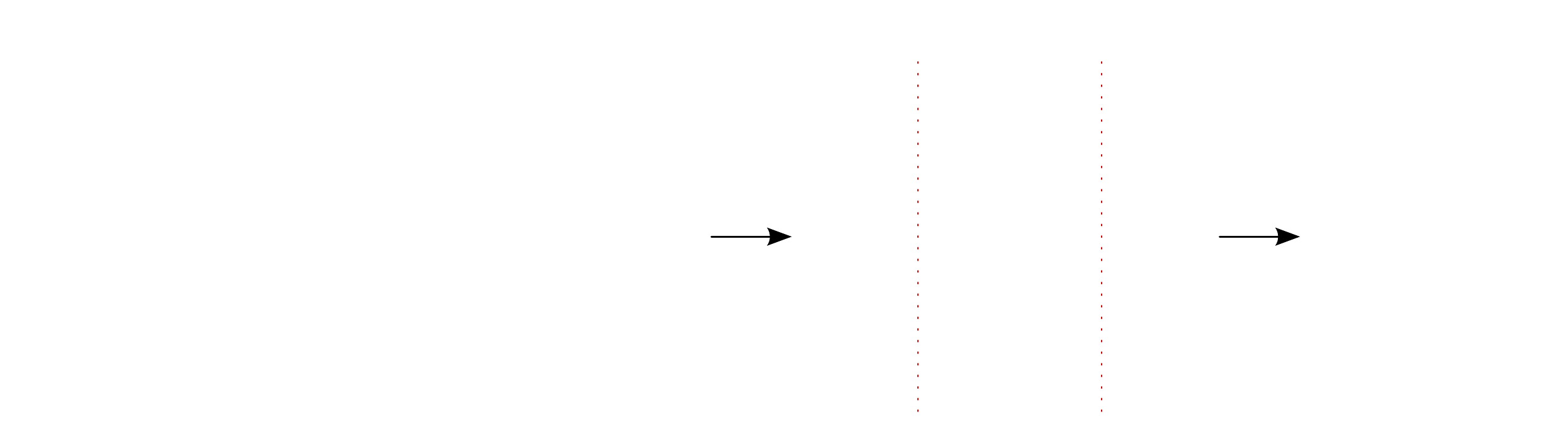%

\caption{Bijective mapping from a Schr\"{o}der path (length $=18$) that is smooth at horizontal distances $3i+2$ from the origin (indicated by red dashed lines) for $0 \le i < 6$ to a $\CWRSet$ path (length $=6$).}
\label{fig:schrmap}
\end{figure}

The mapping $\gamma$ is similar to $\phi$ but maintains a constant horizontal width by replacing vertical steps in the $\SVCW$ paths with \emph{two} diagonal steps and replacing horizontal steps in $\SVCW$ with \emph{two} horizontal steps.

We define the inverse mapping $\lambda$, illustrated in Figure \ref{fig:schrunmap}, in which the following substitutions are performed on steps that are centered at each column in the integer lattice:
\begin{eqnarray*}
\lambda : & \step{0,2}  & \to \step{1,1}\step{1,1}\\
\lambda : & \step{0,-2} & \to \step{1,-1}\step{1,-1} \\
\lambda : & \step{2,0}  & \to \step{2,0}\step{2,0} \\
\lambda : & \text{no step}  & \to \text{insert }\step{2,0}
\end{eqnarray*}

\begin{figure}[H]
\centering
    \def\svgwidth{\textwidth}%
    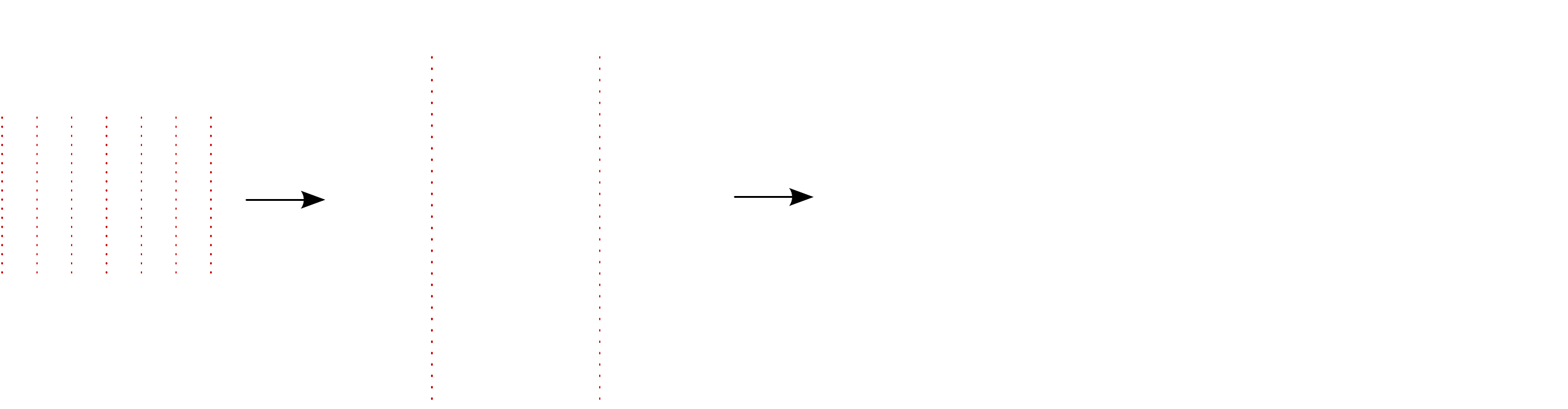%

\caption{Bijective mapping from a $\GCWSet$ path (length $=6$) to a Delannoy path (length $=20$) that is smooth at  horizontal distances $3i+1$ from the origin (indicated by red dashed lines) for $0 \le i \le 6$.}
\label{fig:schrunmap}
\end{figure}

\begin{theorem}The mapping $\gamma$ is a bijection from Schr\"{o}der (Delannoy) paths terminating at $(3n+2,m)$ that are smooth at each horizontal distance $3i+1$ from the origin for $0 \le i \le n$ to \vc paths of type  $\CWSet$ $(\GCWSet)$ terminating at $(n,m)$.
\label{theo:bijectionSchrod}
\end{theorem}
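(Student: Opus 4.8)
The plan is to mirror the proof of Theorem~\ref{theo:bijectionA}, replacing the alternating-step structure of $\phi$ with the ``every third column'' structure dictated by the smoothness hypothesis. First I would argue that $\gamma$ is well defined on the given domain. The line $x=3i+1$ must be crossed by the path, and since diagonal steps meet integer vertical lines only at their endpoints, smoothness at $x=3i+1$ forces exactly one of four local pictures: two equal diagonals $\step{1,1}\step{1,1}$ or $\step{1,-1}\step{1,-1}$ meeting at the vertex on that line, two horizontal steps $\step{2,0}\step{2,0}$ meeting at that vertex, or a single horizontal step $\step{2,0}$ whose midpoint lies on the line. These are precisely the four left-hand sides of $\gamma$, so the substitution at each marked line is unambiguous, and every remaining step (necessarily a diagonal lying strictly between two marked lines) is copied. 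Because consecutive marked lines are three columns apart while every step has width at most $2$, no step touches two marked lines, so the units at distinct marked lines are disjoint and the domain of $\gamma$ partitions cleanly into the $n+1$ collapsible units and the copied ``pass-through'' diagonals.

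Next I would verify that the image is a genuine $\CWSet$ $(\GCWSet)$ path terminating at $(n,m)$. The four outputs $\step{0,2},\step{0,-2},\step{2,0}$ together with the copied diagonals all lie in $\SVCW$. Each substitution preserves net vertical displacement ($\step{1,1}\step{1,1}$ and $\step{0,2}$ both rise by $2$, the horizontal cases are flat) and in fact leaves the whole set of heights visited inside a unit unchanged, so both the terminal height $m$ and the lowest height along the path are preserved; hence a Schr\"{o}der (quarter-plane) operand yields a $\CWSet$ result while a Delannoy (half-plane) operand yields a $\GCWSet$ result. For the horizontal extent, each of the four rules shrinks the local width by exactly $2$, and there are $n+1$ marked lines, so the total width drops from $3n+2$ to $(3n+2)-2(n+1)=n$. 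The \vc condition is the one genuinely new point: vertical output steps arise only from the two diagonal-pair rules, and two such units at adjacent marked lines $3i+1$ and $3i+4$ leave the width-one gap $[3i+2,3i+3]$, which a horizontal step cannot occupy without overlapping a unit and so must be spanned by a single pass-through diagonal. That diagonal separates the two vertical steps in the image, ruling out $\step{0,2}\step{0,2}$, $\step{0,-2}\step{0,-2}$ and the self-intersecting mixed pairs.

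Finally I would check that $\lambda$ is the two-sided inverse, exactly as $\psi$ inverts $\phi$ in Theorem~\ref{theo:bijectionA}. Each $\gamma$-rule is individually reversible, and under the correspondence ``integer column $i$ of the $\CWSet$ path $\leftrightarrow$ smooth line $x=3i+1$ of the Schr\"{o}der path'' the $\lambda$-rules re-expand a vertical step into a diagonal pair, double a horizontal step, and restore a removed horizontal step at a column carrying no step, leaving the copied diagonals untouched. Showing that $\lambda\circ\gamma$ and $\gamma\circ\lambda$ are the identity then reduces to reading the two tables of substitutions against each other, and an argument symmetric to the one above shows $\lambda$ sends a $\CWSet$ $(\GCWSet)$ path to a smooth Schr\"{o}der (Delannoy) path of length $3n+2$. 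I expect the main obstacle to be purely bookkeeping: unlike the width-one steps of the Motzkin case, here diagonals have width $1$ and horizontals width $2$, so the care needed to show that $\gamma$ and $\lambda$ track horizontal position consistently, so that the removed and inserted horizontal steps of the ``empty'' case match up and the marked lines land exactly at $x=3i+1$, is where the proof must be most precise.
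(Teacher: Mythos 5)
Your proposal is correct and takes essentially the same approach as the paper: the same exhaustive analysis of the four local configurations at a smooth column (two repeated diagonals, two repeated horizontals, or a single horizontal step crossed at its midpoint), the same width bookkeeping ($n+1$ substitutions each shrinking the path by $2$, taking $3n+2$ to $n$), the same height-preservation argument for the half-plane/quarter-plane distinction, and the same spacing argument for the \vc condition, with $\lambda$ as the inverse. The only cosmetic difference is the final step: you verify that $\lambda\circ\gamma$ and $\gamma\circ\lambda$ are identities directly, while the paper concludes bijectivity from the mutual injectivity of $\gamma$ and $\lambda$ between finite sets; both are routine once your disjoint-unit decomposition is established.
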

\begin{proof}
We start by proving that $\gamma$ applied to a \ssd Schr\"{o}der (Delannoy) path always produces a \vc $\CWSet$ $(\GCWSet)$ path.  The set of step vectors $\SVCS$ is a subset of $\SVCW$ and all substitutions in $\gamma$ are from the set  $\SVCW$, therefore the resulting path is made up of steps from $\SVCW$.  Because there is a spacing of three columns in the pre-image between $\gamma$ substitutions, there is at least one column between vertical steps in the image satisfying the non-consecutive condition of \vc  $\SVCW$ paths.  Each $\gamma$ substitution reduces the horizontal path length by 2 resulting, after $n+1$ substitutions, in a path of length $n$.  As in the Motzkin path bijections of Section \ref{sec:bijection}, there is no change in the height of step endvertices ensuring path images remain in the same half or quadrant as their pre-image.

Next we will prove that the \vc $\SVCW$ paths and  \ssd Schr\"{o}der (Delannoy) paths are equinumerous.  There are exactly four ways in which a Schr\"{o}der path can be smooth across a column: two repeated steps of the same type $(\rightarrow\rightarrow$, $\nearrow\nearrow$, and $\searrow\searrow)$ or a single horizontal step of length two.  All four of these cases are uniquely mapped by $\gamma$ to steps in $\SVCW$.  Since each \ssd Schr\"{o}der (Delannoy) path under $\gamma$ produces a unique \vc $\SVCW$ path, the number of \vc $\SVCW$ paths is greater than or equal to the number of \ssd Schr\"{o}der (Delannoy) paths.  Similarly, the mapping $\lambda$ applied to a \vc $\SVCW$ path can be proven to always produce a unique \ssd Schr\"{o}der (Delannoy) path. From this we can conclude that the number of \ssd Schr\"{o}der (Delannoy) paths is greater than or equal to the number of \vc $\SVCW$ paths.  The two contradictory inequalities imply that the two sets of paths are equinumerous.
\end{proof}

\begin{theorem}The mapping $\gamma$ is a bijection from Schr\"{o}der (Delannoy) paths terminating at $(3n,m)$ that are smooth at a horizontal distance $3i+2$ from the origin for $0 \le i < n$ to \vc paths of type  $\CWRSet$ $(\GCWRSet)$ terminating at $(n,m)$.
\label{theo:bijectionSchrod2}
\end{theorem}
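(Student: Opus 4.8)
The plan is to recycle the proof of Theorem~\ref{theo:bijectionSchrod} almost verbatim, changing only the grid of columns at which the $\gamma$-substitutions are anchored: instead of the columns $3i+1$ one uses $3i+2$ for $0\le i<n$. This one-column shift of the substitution grid is the Schr\"{o}der--Delannoy analogue of the passage from ``start at the first step'' to ``start at the second step'' that separates the $\MWSet$ and $\MWRSet$ bijections in Section~\ref{sec:bijection}, and the resulting map is the one depicted in Figure~\ref{fig:schrmap}.

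First I would reprove, word for word as in Theorem~\ref{theo:bijectionSchrod}, that $\gamma$ carries a smooth Schr\"{o}der (Delannoy) path to a genuine $\SVCW$-path: every substitution rule outputs a step of $\SVCW\supseteq\SVCS$; the anchored columns are three apart, so any two vertical steps created by $\gamma$ are separated by at least one column and the vertically constrained condition holds; and no substitution alters the height of an endvertex, so the quarter-plane (half-plane) restriction is inherited. The only genuinely new bookkeeping is the horizontal count. There are now $n$ anchored columns, namely $2,5,\dots,3n-1$, hence $n$ substitutions; since each substitution removes exactly two units of width, a source path of width $3n$ is sent to an image of width $3n-2n=n$ terminating at $(n,m)$.

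The crux is to verify that the image lies in $\CWRSet$ $(\GCWRSet)$ rather than in $\CWSet$ $(\GCWSet)$, i.e.\ that its leading step is non-vertical. A vertical step of the image can arise only from a $\nearrow\nearrow$ or $\searrow\searrow$ configuration, which occupies its anchor column; since the first anchor is now at $x=2$ and not at $x=1$, no collapsed vertical can sit at the leftmost position. The part of the source lying in the columns before $x=2$ is therefore either a single diagonal that $\gamma$ copies unchanged or a $\step{2,0}\step{2,0}$ pair straddling $x=2$ that $\gamma$ collapses to a single $\step{2,0}$, and in either case the leading step of the image belongs to $\SVCS$, which is exactly the defining condition of $\CWRSet$ $(\GCWRSet)$. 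Running the inverse $\lambda$ in the same fashion rebuilds a Schr\"{o}der (Delannoy) path smooth precisely at the columns $3i+2$, and the two-injection argument already used for Theorem~\ref{theo:bijectionSchrod} gives equinumerosity, so $\gamma$ is a bijection with inverse $\lambda$. The step I expect to demand the most care is exactly this left-boundary analysis: confirming that anchoring at $3i+2$ forbids a vertical leading step in every configuration (including the $\step{2,0}\step{2,0}$ pair that straddles $x=2$) and that the width count is exactly $3n\mapsto n$ rather than off by one as in the $3n+2\mapsto n$ tally of Theorem~\ref{theo:bijectionSchrod}; everything else transfers mechanically.
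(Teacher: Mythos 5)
Your proposal is correct and follows essentially the same route as the paper, whose proof is just a terser version of yours: it appeals to the argument for Theorem~\ref{theo:bijectionSchrod}, notes that the leading step of the \ssd path is not turned into a vertical step by $\gamma$ (so the image lies in $\CWRSet$ or $\GCWRSet$), and counts $n$ substitutions to get length $n$. Your extra bookkeeping—the explicit anchor columns $2,5,\dots,3n-1$, the $3n\mapsto n$ width tally, and the case analysis at the left boundary including the $\step{2,0}\step{2,0}$ pair straddling $x=2$—is a sound and slightly more careful elaboration of exactly those points.
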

\begin{proof}
The proof is similar to that of Theorem \ref{theo:bijectionSchrod}.  The first step of the \ssd Schr\"{o}der (Delannoy) path is not modified by $\gamma$, therefore there can be no leading vertical step.  There are $n$ substitutions, resulting in a \vc path of length $n$.
\end{proof}

\begin{figure}[H]
\centering
    \def\svgwidth{0.8\textwidth}%
    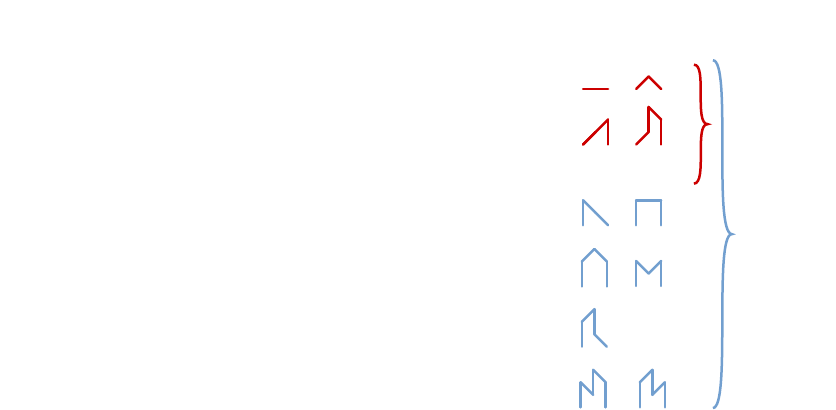%

\caption{Schr\"{o}der paths ($n=6$ and $n=8$) that are smooth at a regular interval of 3 columns and the corresponding $\CWRSet$ and $\CWSet$ paths ($n=2$).}
\label{fig:exampleSchroder}
\end{figure}

\subsubsection{Generating Functions}
\begin{theorem}
The generating functions for $\GCWRCount(n,m)$ and $\GCWCount(n,m)$ are
\begin{align*}
\gcwr(x,y) &:= \sum_{n \ge 0} \sum_{m\in\mathbb{Z}} \GCWRCount(n,m) x^n y^m = \frac{y^3}{y^3 {-} x\left(1{+}xy{+}y^2\right)\left(1{+}y^2{+}y^4 \right)},\\
\gcw(x,y)  &:= \sum_{n \ge 0} \sum_{m\in\mathbb{Z}} \GCWCount(n,m) x^n y^m = \frac{y\left( 1 {+} y^2 {+} y^4 \right)}{y^3 {-} x\left( 1{+}xy{+}y^2 \right)\left( 1 {+} y^2 {+} y^4 \right)},\\
\gcwr(x,0) &:= \sum_{n \ge 0} \GCWRCount(n,0) x^n = \frac{\gdflat(x,0)+\gdflat(-x,0)}{2},\\
\gcw(x,0) &:= \sum_{n \ge 0} \GCWCount(n,0) x^n = \frac{\gdflat(x,0)-\gdflat(-x,0)}{2x},
\intertext{where}
\gdflat(x,0)&= \sum_{y\in\{s_1,s_2,s_3\}}\frac{y^2+x \left(1+y^2+y^4\right)}{3 y^2-2 x y \left(2+4 y^2+3 y^4\right)-x^2 \left(1+3 y^2+5 y^4\right)},
\intertext{and}
s_1,s_2,s_3&:= \text{unique roots of }y^3 {-} x\left(1{+}xy{+}y^2\right)\left(1{+}y^2{+}y^4\right)\text{ that are finite at 0}.
\end{align*}
\end{theorem}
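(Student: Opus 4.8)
The plan is to follow the same two-stage strategy used for the Grand Dyck paths in Theorem~\ref{thm:GDW}: first obtain the bivariate generating functions from the recurrence, then extract the constant-in-$y$ part by a residue computation, and finally split that part into its two pieces using a parity symmetry.

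First I would derive $\gcwr(x,y)$ and $\gcw(x,y)$ directly from the recurrence for $\GCWCount(n,m)$ (and its $\GCWRCount$ analogue). Multiplying the recurrence by $x^n y^m$ and summing over $n\ge 1$ and $m\in\mathbb{Z}$, every term $c\,\GCWCount(n-i,m-j)$ contributes $c\,x^i y^j\,\gcw(x,y)$, so the seven right-hand terms assemble into the single factor
\[
x\,y^{-3}\left(1+y^2+y^4\right)\left(1+xy+y^2\right),
\]
once one notices the factorisations $y^{-3}+2y^{-1}+2y+y^3 = y^{-3}(1+y^2)(1+y^2+y^4)$ and $y^{-2}+1+y^2 = y^{-2}(1+y^2+y^4)$. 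The only input from the initial conditions is $\sum_m \GCWCount(0,m)y^m = y^{-2}(1+y^2+y^4)$ for the unrestricted family and $1$ for the restricted family; solving the resulting linear equations produces the stated rational forms with common denominator $K(x,y):=y^3-x(1+xy+y^2)(1+y^2+y^4)$.

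Next I would extract $\gcwr(x,0)=[y^0]\gcwr(x,y)$ and $\gcw(x,0)=[y^0]\gcw(x,y)$. Since both series are Laurent in $y$ with negative powers, one cannot set $y=0$; instead, exactly as in Theorem~\ref{thm:GDW}, I would write the constant term as a sum of residues of $\gcwr(x,y)/y$ (resp.\ $\gcw(x,y)/y$) at the poles tending to the origin as $x\to0$. Because $K(x,y)\to y^3$ as $x\to0$, precisely three roots $s_1,s_2,s_3$ of $K$ vanish at $x=0$ (Stanley~\cite{stanley}, Proposition~6.1.8, with Puiseux expansions read off the Newton polygon of $K$), and these are simple. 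Writing $K_y=3y^2-2xy(2+4y^2+3y^4)-x^2(1+3y^2+5y^4)$ for $\partial K/\partial y$, the residues give $\gcwr(x,0)=\sum_i s_i^2/K_y(x,s_i)$ and $\gcw(x,0)=\sum_i (1+s_i^2+s_i^4)/K_y(x,s_i)$, so the combination $\gdflat(x,0):=\gcwr(x,0)+x\,\gcw(x,0)$ merges these into the single root-sum in the statement.

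Finally I would recover the two pieces separately. The clean route is the symmetry $K(-x,y)=-K(x,-y)$, immediate from the explicit form of $K$; differentiating gives $K_y(-x,y)=K_y(x,-y)$, and it sends the small roots of $K(-x,\cdot)$ to $\{-s_1,-s_2,-s_3\}$. Substituting into the root-sum shows $\gdflat(-x,0)=\gcwr(x,0)-x\,\gcw(x,0)$, and solving the resulting two-by-two system yields $\gcwr(x,0)=\tfrac12\bigl(\gdflat(x,0)+\gdflat(-x,0)\bigr)$ and $\gcw(x,0)=\tfrac1{2x}\bigl(\gdflat(x,0)-\gdflat(-x,0)\bigr)$. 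Equivalently, one can argue combinatorially: every step of $\SVCW$ changes $x+y$ by an even amount, so $\GCWCount(n,0)=\GCWRCount(n,0)=0$ for odd $n$, which is precisely the even/odd bisection encoded by $x\mapsto -x$. The main obstacle is the residue extraction itself—justifying that the constant term equals the sum over the small roots and that there are exactly three simple such roots; once that is in place, the factorisation of the kernel and the parity symmetry are routine.
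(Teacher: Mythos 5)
Your proposal is correct and follows essentially the same route as the paper: term expansion of the recurrences to get the rational bivariate forms with kernel $y^3-x(1+xy+y^2)(1+y^2+y^4)$, residue extraction over the three small roots (simple, vanishing at the origin, via Stanley's Proposition~6.1.8), and bisection of $\gdflat(x,0)=\gcwr(x,0)+x\,\gcw(x,0)$ to recover the two series. The only cosmetic differences are that you take residues of $\gcwr$ and $\gcw$ separately before combining, whereas the paper combines into $\gdflat(x,y)$ first and extracts once, and that you explicitly justify the bisection via the kernel symmetry $K(-x,y)=-K(x,-y)$ (equivalently the even-parity of $n+m$ along $\SVCW$ paths), which the paper leaves implicit in its checkerboard remark.
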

\begin{proof}
The generating functions for $\GCWSet$ and $\GCWRSet$ paths, expressed as $\gcw(x,y)$ and $\gcwr(x,y)$, can be determined by expanding terms in the general summation using the recurrence relations found in Section~\ref{sec:schroderrecurrence}.
To enumerate only the paths that terminate on the x-axis, we first combine the general equations, each of which has zero values at alternating cell positions in the triangle representation, to obtain $\gdflat(x,y)$, an enumeration of Delannoy paths that are smooth at regular horizontal distances,
\begin{align*}
\gdflat(x,y)&:= \gcwr(x,y)+x\gcw(x,y) = \frac{y^3+xy(1+y^2+y^4)}{y^3 {-} x\left(1{+}xy{+}y^2\right)\left(1{+}y^2{+}y^4 \right)}.
\end{align*}
Because $\gdflat(x,y)$ is a ratio of polynomials, we can once again apply the calculus of residues to get an explicit formula for $\gdflat(x,0)$.
Proposition 6.1.8 of Stanley~\cite{stanley} implies there are three roots of the kernel, $H:=y^3 {-} x\left(1{+}xy{+}y^2\right)\left(1{+}y^2{+}y^4\right)$, that are finite as $x\rightarrow 0$, and they all vanish at the origin. We sum over the residues evaluated at these three roots to obtain an exact form for $\gdflat(x,0)$. Finally, we use bisection to return to our original goal.
\end{proof}

\begin{theorem}
The generating functions for $\CWRCount(n,m)$ and $\CWCount(n,m)$ are
\begin{align*}
\cwr(x,y) & := \sum_{n \ge 0} \sum_{m \ge 0} \CWRCount(n,m) x^n y^m \\
&= \frac{-\left(s_1-y\right)\left(s_2-y\right)\left(s_3-y\right)\left(1-s_1 s_2 s_3 y\right)}
{(1+s_1 s_2 s_3 (s_1+s_2+s_3)) \left(y^3 {-} x\left( 1 {+} xy {+} y^2\right)\left( 1 {+} y^2 {+} y^4 \right)\right)}, \\
\cw(x,y) & := \sum_{n \ge 0} \sum_{m \ge 0} \CWCount(n,m) x^n y^m \\
& = \frac{-(s_1-y)(s_2-y)(s_3-y)}{(1+s_1 s_2 s_3 (s_1+s_2+s_3)) \left(y^3 {-} x\left( 1 {+} xy {+} y^2\right)\left( 1 {+} y^2 {+} y^4 \right)\right)}\\
&\quad\times\Bigg(1{+}\frac{1}{2}\left((s_1{+}s_2)^2{+}(s_1{+}s_3)^2{+}(s_2{+}s_3)^2\right)\\
&\qquad + \left(s_1+s_2+s_3+s_1 s_2 s_3\left(s_1 s_2 +s_1 s_3+s_2 s_3 -1\right)\right) y \\
&\qquad + \left(1+s_1 s_2 s_3\left(s_1 + s_2 + s_3\right)\right) y^2\Bigg),\\
\cwr(x,0) & := \sum_{n \ge 0}\CWRCount(n,0) x^n,\\
          & = \frac{s_1s_2s_3}{x(1+s_1 s_2 s_3(s_1+s_2+s_3))}\\
\cw(x,0) & :=\sum_{n \ge 0}\CWCount(n,i) x^n, \\
         & = \frac{s_1s_2s_3(1{+}\frac{1}{2}((s_1{+}s_2)^2{+}(s_1{+}s_3)^2{+}(s_2{+}s_3)^2))}
                  {x(1+s_1 s_2 s_3(s_1+s_2+s_3))}
\end{align*}
\label{theo:gfcwr}
\end{theorem}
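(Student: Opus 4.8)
The plan is to follow the kernel-method argument of Theorem~\ref{theo:dw}, upgraded from a quadratic to a cubic kernel. First I would introduce the bivariate series $\cw(x,y)=\sum_{n,m\ge0}\CWCount(n,m)x^ny^m$ and $\cwr(x,y)=\sum_{n,m\ge0}\CWRCount(n,m)x^ny^m$ and convert the recurrences of Section~\ref{sec:schroderrecurrence} into functional equations by expanding terms in the general summation~\cite[p.~5]{wilf}. The three \emph{downward} terms of the generic recurrence reindex cleanly (negative heights are padded with zeros automatically), but the three \emph{upward} terms $\CWCount(n{-}1,m{+}3)$, $\CWCount(n{-}2,m{+}2)$ and $2\CWCount(n{-}1,m{+}1)$ each lose their lowest rows upon reindexing, so the near-axis series $\cw(x,0)$, $T_1(x):=\sum_{n\ge0}\CWCount(n,1)x^n$ and $T_2(x):=\sum_{n\ge0}\CWCount(n,2)x^n$ enter as unknowns; the special rows $m=0,1,2$ of the recurrence contribute only further multiples of these same three series. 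Collecting the $\cw(x,y)$ terms on the left gives
\[ K(x,y)\,\cw(x,y)=N(x,y)+A(y)\,\cw(x,0)+B(y)\,T_1(x)+C(y)\,T_2(x), \]
where $K(x,y)=y^3-x(1+xy+y^2)(1+y^2+y^4)$ is the kernel already visible in the half-plane denominators, and $N,A,B,C$ are explicit polynomials (with $N$ built from the initial rows $n=0,1$).

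By the same application of Proposition~6.1.8 of Stanley~\cite{stanley} used in Theorem~\ref{thm:GDW}, the kernel has exactly three roots $s_1,s_2,s_3$ in $y$ that are finite---indeed vanishing---as $x\to0$, with Puiseux expansions fixed by the Newton polygon of $K$. Because $\cw(x,y)$ is a power series in $x$ whose coefficients are polynomials in $y$, each substitution $y=s_i(x)$ is a well-defined composition and annihilates the left-hand side, yielding the three linear relations
\[ 0=N(x,s_i)+A(s_i)\,\cw(x,0)+B(s_i)\,T_1(x)+C(s_i)\,T_2(x),\qquad i=1,2,3, \]
in the three unknowns $\cw(x,0),T_1(x),T_2(x)$.

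Solving this $3\times3$ system by Cramer's rule produces each unknown as a ratio over a common determinant. I expect this determinant to collapse, after repeated use of the defining relations $s_i^3=x(1+xs_i+s_i^2)(1+s_i^2+s_i^4)$, to the symmetric factor $1+s_1 s_2 s_3(s_1+s_2+s_3)$ appearing in every stated denominator, while the numerators assemble into the elementary symmetric functions of $s_1,s_2,s_3$. In particular $\cw(x,0)$ emerges directly as one of the solved unknowns, as does $\cwr(x,0)$ in the parallel system; back-substituting all three near-axis series into the functional equation and simplifying recovers the bivariate $\cw(x,y)$ and $\cwr(x,y)$, with extraction of the $y^0$ coefficient furnishing a consistency check against the univariate formulas. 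The restricted case $\cwr$ runs verbatim, starting instead from $\CWRCount(0,m)=\I{m=0}$ and its own three near-axis series.

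The main obstacle is twofold and essentially computational. First, one must assign the boundary bookkeeping exactly right: each reindexed upward term and each special row $m=0,1,2$ alters a specific coefficient in $A,B,C$, and a single misplaced term corrupts the entire linear system. Second, because the $s_i$ are roots of a sextic and are not rational in $x$, the reduction of Cramer's ratios to the claimed symmetric form cannot proceed by substituting closed forms; it must be carried out purely through symmetric-function identities together with the kernel relations above. Matching the resulting series against the recurrence to several orders in $x$ provides a practical safeguard that the simplification is correct.
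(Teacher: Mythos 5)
Your proposal matches the paper's proof essentially step for step: term expansion of the recurrences from Section~\ref{sec:schroderrecurrence} yields functional equations with kernel $K(x,y)=y^3-x\left(1+xy+y^2\right)\left(1+y^2+y^4\right)$ and three unknown boundary series $\cwr(x,0)$ (resp.\ $\cw(x,0)$) together with the height-one and height-two series (the paper's $W_1,W_2$ and $V_1,V_2$, your $T_1,T_2$), and substituting the three kernel roots $s_1,s_2,s_3$ that vanish at the origin (justified, as in Theorem~\ref{thm:GDW}, by Proposition~6.1.8 of Stanley) gives a $3\times3$ linear system whose solution produces the stated formulas. Your added remarks on the well-definedness of the substitutions $y=s_i(x)$ and on carrying out the simplification via symmetric-function identities and the relations $s_i^3=x\left(1+xs_i+s_i^2\right)\left(1+s_i^2+s_i^4\right)$ merely elaborate steps the paper leaves implicit.
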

\begin{proof}
Again we use a combination of term expansion and the kernel method.  By term expansion in the general summation, we obtain
\begin{align}
\label{eq:cw1}
\cwr(x,y) & = \frac{y^3 {-} x\left(1{+}xy{+}2y^{2}{+}y^{4}\right)\cwr(x,0) {-} xy(1{+}xy)W_1(x) {-} xy^2W_2(x)}{y^3 {-} x\left( 1 {+} xy {+} y^2\right)\left( 1 {+} y^2 {+} y^4 \right)}\\
\cw(x,y) & = \frac{y\left( y^2{+}y^4\right) {-} x\left(1{+}xy{+}2y^{2}{+}y^{4}\right)\cw(x,0) {-} xy(1{+}xy)V_1(x) {-} xy^2V_2(x)}{y^3 {-} x\left( 1 {+} xy {+} y^2\right)\left( 1 {+} y^2 {+} y^4 \right)}, \\
\intertext{where}
W_i(x) & := \sum_{n \ge 0}\CWRCount(n,i) x^n, \quad i \in {1,2},  \nonumber\\
V_i(x) & := \sum_{n \ge 0}\CWCount(n,i) x^n, \quad i \in {1,2}.  \nonumber
\end{align}
As in the half-plane case, use the three roots $s_1,s_2$ and $s_3$ of the kernel $K(x,y) = y^3 {-} x\left( 1 {+} xy {+} y^2\right)\left( 1 {+} y^2 {+} y^4 \right)$, which vanish at the origin, to obtain a system of three equations in the three unknowns $\cwr(x,0)$, $W_1(x)$ and $W_2(x)$.  Solving this system gives the stated result.
\end{proof}

\section{Future Work}
Inspired by a lattice path model arising in the study of lace, we have studied the \vc extensions of Motzkin, Dyck, Schr\"{o}der and Delannoy paths.  There are many potential generalizations: the same approach can be applied to any set of directed step vectors; we have focused our discussion on vertical steps that are of the same length as the horizontal steps but the length of the vertical steps can also be allowed to vary; the restriction that there can be zero consecutive vertical steps could be generalized to allow at most $k$ consecutive vertical steps.
We have demonstrated the existence of a bijective relationship between \vc paths and subsets of lattice path families with similar step vectors.  Because the vertical constraint has clear geometric consequences, it is straight-forward to derive a recurrence relation.  Consequently, \vc paths may serve as a useful tool for analyzing subsets of other lattice path families subject to periodic constraints.

\bibliographystyle{plain}
\bibliography{mybib}

\pagebreak
\appendix

\begin{landscape}
\section{Appendix: Integer Sequence Tables}
\label{sec:tables}

\begin{table}[H]
\centering
\begin{adjustbox}{max width=\textwidth}
\begin{tabular}{|r|r|r|r|r|r|r|r|r|r|r|r|r|r|r|r|r|r|}
  \hline
  $n\backslash m$ &0 &$\abs{1}$ &$\abs{2}$ &$\abs{3}$ &$\abs{4}$ &$\abs{5}$ &$\abs{6}$ &$\abs{7}$ &$\abs{8}$ &$\abs{9}$ &$\abs{10}$ &$\abs{11}$ &$\abs{12}$ &$\abs{13}$ &$\abs{14}$ &$\abs{15}$ &$\abs{16}$ \\ \hline \hline
0                 &1 & & & & & & & & & & & & & & & & \\		
1                 &3 &2 &1  & & & & & & & & & & & & & &\\		
2                 &19 &16 &10 &4 &1 & & & & & & & & & & &   &\\
3                 &141 &126 &90 &50 &21 &6 &1 & & & & & & & & &  &\\
4                 &1107 &1016 &784 &504 &266 &112 &36 &8 &1 & & & & & & & &\\
5                 &8953 &8350 &6765 &4740 &2850 &1452 &615 &210 &55 &10 &1 & & & & & &\\
6                 &73789 &69576 &58278 &43252 &28314 &16236 &8074 &3432 &1221 &352 &78 &12 &1 & & & &\\
7                 &616227 &585690 &502593 &388752 &270270 &168168 &93093 &45474 &19383 &7098 &2184 &546 &105 &14 &1 & &\\
8                 &5196627  &4969152 &4343160 &3465840 &2520336  &1665456 &996216  &536640 &258570 &110448 &41328 &13328 &3620 &800 &136 &16 &1 \\ \hline
OEIS              &A082758 & & & & & & & & & & & & & & & & \\	\hline
\end{tabular}
\end{adjustbox}
\caption{Number of paths terminating at point (n,m) for $\GMWRSet$}
\label{table:recurrence1}
\end{table}

\begin{table}[H]
\centering
\begin{adjustbox}{max width=\textwidth}
\begin{tabular}{|r|r|r|r|r|r|r|r|r|r|r|r|r|r|r|r|r|r|r|}
  \hline
  $n\backslash m$ &0 &$\abs{1}$ &$\abs{2}$ &$\abs{3}$ &$\abs{4}$ &$\abs{5}$ &$\abs{6}$ &$\abs{7}$ &$\abs{8}$ &$\abs{9}$ &$\abs{10}$ &$\abs{11}$ &$\abs{12}$ &$\abs{13}$ &$\abs{14}$ & $\abs{15}$ &$\abs{16}$ &$\abs{17}$ \\ \hline \hline
0                 &1  &1 & & & & & & & & & & & & & & & &\\	
1                 &7  &6  &3  &1 & & & & & & & & & & & & &  &\\
2                 &51  &45  &30  &15  &5  &1 & & & & & & & & & & &  &\\
3                 &393  &357  &266  &161  &77  &28  &7  &1 & & & & & & & & &  &\\
4                 &3139  &2907  &2304  &1554  &882  &414  &156  &45  &9  &1  & & & & & & &  &\\
5                 &25653  &24068  &19855  &14355  &9042  &4917  &2277  &880  &275  &66  &11  &1 & & & & &  &\\
6                 &212941  &201643  &171106  &129844  &87802  &52624  &27742  &12727  &5005  &1651  &442  &91  &13  &1 & & &  &\\
7                 &1787607  &1704510  &1477035  &1161615  &827190  &531531  &306735  &157950  &71955  &28665  &9828  &2835  &665  &120  &15  &1 &  &\\
8                 &15134931  &14508939  &12778152  &10329336  &7651632  &5182008  &3198312  &1791426  &905658  &410346  &165104  &58276  &17748  &4556  &952  &153  &17  &1\\ \hline
OEIS              &Bisect A002426 & & & & & & & & & & & & & & & & & \\ \hline
\end{tabular}
\end{adjustbox}
\caption{Number of paths terminating at point (n,m) for $\GMWSet$}
\label{table:recurrence2}
\end{table}

\begin{table}[H]
\centering
\begin{adjustbox}{max width=\textwidth}
\begin{tabular}{|r||r|r|r|r|r|r|r|r|r|r|r|r|r|r|r|r|r|}
  \hline
  $n\backslash m$ &0 &1 &2 &3 &4 &5 &6 &7 &8 &9 &10 &11 &12 &13 &14 & 15 &16 \\ \hline \hline
0                 &1  & & & & & & & & & & & & & & & & \\
1                 &2  &2  &1  & & & & & & & & & & & & & & \\
2                 &9  &12  &9  &4  &1  & & & & & & & & & & & & \\
3                 &51  &76  &69  &44  &20  &6  &1  & & & & & & & & & & \\
4                 &323  &512  &518  &392  &230  &104  &35  &8  &1  & & & & & & & & \\
5                 &2188  &3610  &3915  &3288  &2235  &1242  &560  &200  &54  &10  &1  & & & & & & \\
6                 &15511  &26324  &29964  &27016  &20240  &12804  &6853  &3080  &1143  &340  &77  &12  &1 & & & & \\
7                 &113634  &196938  &232323  &220584  &177177  &122694  &73710  &38376  &17199  &6552  &2079  &532  &104  &14  &1  & & \\
8                 &853467  &1503312  &1822824  &1800384  &1524120  &1128816  &737646  &426192  &217242  &97120  &37708  &12528  &3484  &784  &135  &16  &1 \\ \hline
OEIS              &A026945 & & & & & & & & & & & & & & & & \\	\hline
\end{tabular}
\end{adjustbox}
\caption{Number of paths terminating at point (n,m) for $\MWRSet$}
\label{table:recurrence3}
\end{table}

\begin{table}[H]
\centering
\begin{adjustbox}{max width=\textwidth}
\begin{tabular}{|r||r|r|r|r|r|r|r|r|r|r|r|r|r|r|r|r|r|r|}
  \hline
  $n\backslash m$ &0 &1 &2 &3 &4 &5 &6 &7 &8 &9 &10 &11 &12 &13 &14 & 15 &16 &17\\ \hline \hline
0                 &1  &1  & & & & & & & & & & & & & & & &\\
1                 &4  &5  &3  &1  & & & & & & & & & & & & & &\\
2                 &21  &30  &25  &14  &5  &1  & & & & & & & & & & & &\\
3                 &127  &196  &189  &133  &70  &27  &7  &1  & & & & & & & & & &\\
4                 &835  &1353  &1422  &1140  &726  &369  &147  &44  &9  &1  & & & & & & & &\\
5                 &5798  &9713  &10813  &9438  &6765  &4037  &2002  &814  &264  &65  &11  &1  & & & & & &\\
6                 &41835  &71799  &83304  &77220  &60060  &39897  &22737  &11076  &4563  &1560  &429  &90  &13  &1  & & & &\\
7                 &310572  &542895  &649845  &630084  &520455  &373581  &234780  &129285  &62127  &25830  &9163  &2715  &650  &119  &15  &1 & & \\
8                 &2356779  &4179603  &5126520  &5147328  &4453320  &3390582  &2292654  &1381080  &740554  &352070  &147356  &53720  &16796  &4403  &935  &152  &17  &1  \\ \hline
OEIS              &A099250 & & & & & & & & & & & & & & & & &\\	\hline
\end{tabular}
\end{adjustbox}
\caption{Number of paths terminating at point (n,m) for $\MWSet$}
\label{table:recurrence4}
\end{table}

\begin{table}[H]
\centering
\begin{adjustbox}{max width=\textwidth}
\begin{tabular}{|r||r|r|r|r|r|r|r|r|r|r|r|r|r|r|r|r|r|r|}
  \hline
  $n\backslash m$ &0 &$\abs{1}$ &$\abs{2}$ &$\abs{3}$ &$\abs{4}$ &$\abs{5}$ &$\abs{6}$ &$\abs{7}$ &$\abs{8}$ &$\abs{9}$ &$\abs{10}$ &$\abs{11}$ &$\abs{12}$ &$\abs{13}$ &$\abs{14}$ & $\abs{15}$ &$\abs{16}$ &$\abs{17}$ \\ \hline \hline
0    &                    1&        &        &        &        &        &        &        &        &        &        &        &        &        &        &        &        &                \\
1    &                    2&       1&       1&        &        &        &        &        &        &        &        &        &        &        &        &        &        &                \\
2    &                    8&       6&       5&       2&       1&        &        &        &        &        &        &        &        &        &        &        &        &                \\
3    &                   38&      33&      27&      16&       9&       3&       1&        &        &        &        &        &        &        &        &        &        &                \\
4    &                  196&     180&     150&     104&      65&      32&      14&       4&       1&        &        &        &        &        &        &        &        &                \\
5    &                 1052&     990&     845&     635&     430&     251&     130&      55&      20&       5&       1&        &        &        &        &        &        &                \\
6    &                 5774&    5502&    4797&    3786&    2721&    1752&    1016&     516&     231&      86&      27&       6&       1&        &        &        &        &                \\
7    &                32146&   30863&   27377&   22344&   16793&   11543&    7252&    4117&    2107&     952&     378&     126&      35&       7&       1&        &        &                \\
8    &               180772&  174456&  156900&  131264&  102102&   73592&   49064&   30088&   16913&    8632&    3976&    1624&     582&     176&      44&       8&       1&                \\ \hline
OEIS&              A026520& A026521 & A026522 & A026523 & A026524 & & & & & & & & & & & & & \\	\hline
\end{tabular}
\end{adjustbox}
\caption{Number of paths terminating at point (n,m) for $\GDWRSet$}
\label{table:recurrence6}
\end{table}

\begin{table}[H]
\centering
\begin{adjustbox}{max width=\textwidth}
\begin{tabular}{|r||r|r|r|r|r|r|r|r|r|r|r|r|r|r|r|r|r|r|}
  \hline
  $n\backslash m$ &0 &$\abs{1}$ &$\abs{2}$ &$\abs{3}$ &$\abs{4}$ &$\abs{5}$ &$\abs{6}$ &$\abs{7}$ &$\abs{8}$ &$\abs{9}$ &$\abs{10}$ &$\abs{11}$ &$\abs{12}$ &$\abs{13}$ &$\abs{14}$ & $\abs{15}$ &$\abs{16}$ &$\abs{17}$\\ \hline \hline
0   &                1&       1&        &        &        &        &        &       &       &       &       &      &      &     &     &    &    &   \\
1   &                4&       4&       2&       1&        &        &        &       &       &       &       &      &      &     &     &    &    &   \\
2   &               20&      19&      13&       8&       3&       1&        &       &       &       &       &      &      &     &     &    &    &   \\
3   &              104&      98&      76&      52&      28&      13&       4&      1&       &       &       &      &      &     &     &    &    &   \\
4   &              556&     526&     434&     319&     201&     111&      50&     19&      5&      1&       &      &      &     &     &    &    &   \\
5   &             3032&    2887&    2470&    1910&    1316&     811&     436&    205&     80&     26&      6&     1&      &     &     &    &    &   \\
6   &            16778&   16073&   14085&   11304&    8259&    5489&    3284&   1763&    833&    344&    119&    34&     7&    1&     &    &    &   \\
7   &            93872&   90386&   80584&   66514&   50680&   35588&   22912&  13476&   7176&   3437&   1456&   539&   168&   43&    8&   1&    &   \\
8   &           529684&  512128&  462620&  390266&  306958&  224758&  152744&  96065&  55633&  29521&  14232&  6182&  2382&  802&  228&  53&   9&  1\\ \hline
OEIS&          A026520& A026521 & A026522 & A026523 & A026524 & & & & & & & & & & & & & \\	\hline
\end{tabular}
\end{adjustbox}
\caption{Number of paths terminating at point (n,m) for $\GDWSet$}
\label{table:recurrence5}
\end{table}

\begin{table}[H]
\centering
\begin{adjustbox}{max width=\textwidth}
\begin{tabular}{|r||r|r|r|r|r|r|r|r|r|r|r|r|r|r|r|r|r|r|}
  \hline
  $n\backslash m$ &0 &1 &2 &3 &4 &5 &6 &7 &8 &9 &10 &11 &12 &13 &14 & 15 &16 &17\\ \hline \hline
0    &               1&        &        &        &        &        &        &        &        &        &        &        &        &        &        &        &        &               \\
1    &               1&       1&       1&        &        &        &        &        &        &        &        &        &        &        &        &        &        &               \\
2    &               3&       4&       4&       2&       1&        &        &        &        &        &        &        &        &        &        &        &        &               \\
3    &              11&      17&      18&      13&       8&       3&       1&        &        &        &        &        &        &        &        &        &        &               \\
4    &              46&      76&      85&      72&      51&      28&      13&       4&       1&        &        &        &        &        &        &        &        &               \\
5    &             207&     355&     415&     384&     300&     196&     110&      50&      19&       5&       1&        &        &        &        &        &        &               \\
6    &             977&    1716&    2076&    2034&    1705&    1236&     785&     430&     204&      80&      26&       6&       1&        &        &        &        &               \\
7    &            4769&    8519&   10584&   10801&    9541&    7426&    5145&    3165&    1729&     826&     343&     119&      34&       7&       1&        &        &               \\
8    &           23872&   43192&   54798&   57672&   53038&   43504&   32151&   21456&   12937&    7008&    3394&    1448&     538&     168&      43&       8&       1&               \\ \hline
OEIS&  Bisect A214938& & & & & & & & & & & & & & & & & \\	\hline
\end{tabular}
\end{adjustbox}
\caption{Number of paths terminating at point (n,m) for $\DWRSet$}
\label{table:recurrence7}
\end{table}

\begin{table}[H]
\centering
\begin{adjustbox}{max width=\textwidth}
\begin{tabular}{|r||r|r|r|r|r|r|r|r|r|r|r|r|r|r|r|r|r|r|}
  \hline
  $n\backslash m$ &0 &1 &2 &3 &4 &5 &6 &7 &8 &9 &10 &11 &12 &13 &14 & 15 &16 &17\\ \hline \hline
0    &    1&       1&        &        &        &        &        &        &        &        &        &        &        &        &        &        &        &               \\
1    &    2&       3&       2&       1&        &        &        &        &        &        &        &        &        &        &        &        &        &               \\
2    &    7&      11&      10&       7&       3&       1&        &        &        &        &        &        &        &        &        &        &        &               \\
3    &   28&      46&      48&      39&      24&      12&       4&       1&        &        &        &        &        &        &        &        &        &               \\
4    &  122&     207&     233&     208&     151&      92&      45&      18&       5&       1&        &        &        &        &        &        &        &               \\
5    &  562&     977&    1154&    1099&     880&     606&     356&     179&      74&      25&       6&       1&        &        &        &        &        &               \\
6    & 2693&    4769&    5826&    5815&    4975&    3726&    2451&    1419&     714&     310&     112&      33&       7&       1&        &        &        &               \\
7    &13288&   23872&   29904&   30926&   27768&   22112&   15736&   10039&    5720&    2898&    1288&     496&     160&      42&       8&       1&        &               \\
8    &67064&  121862&  155662&  165508&  154214&  128693&   97111&   66544&   41401&   23339&   11850&    5380&    2154&     749&     219&      52&       9&               \\ \hline
OEIS& Bisect A214938& Bisect A214938& & & & & & & & & & & & & & & & \\	\hline
\end{tabular}
\end{adjustbox}
\caption{Number of paths terminating at point (n,m) for $\DWSet$}
\label{table:recurrence8}
\end{table}

\begin{table}[H]
\centering
\begin{adjustbox}{max width=\textwidth}
\begin{tabular}{|r||r|r|r|r|r|r|r|r|r|r|r|r|r|r|r|r|r|r|}
  \hline
  $n\backslash m$ &0 &$\abs{1}$ &$\abs{2}$ &$\abs{3}$ &$\abs{4}$ &$\abs{5}$ &$\abs{6}$ &$\abs{7}$ &$\abs{8}$ &$\abs{9}$ &$\abs{10}$ &$\abs{11}$ &$\abs{12}$ &$\abs{13}$ &$\abs{14}$ & $\abs{15}$ &$\abs{16}$ &$\abs{17}$\\ \hline \hline
0   &     1&        &        &        &        &        &        &        &        &        &        &        &        &        &        &        &        &                \\
1   &      &       2&        &       1&        &        &        &        &        &        &        &        &        &        &        &        &        &                \\
2   &    11&        &       9&        &       4&        &       1&        &        &        &        &        &        &        &        &        &        &                \\
3   &      &      58&        &      41&        &      20&        &       6&        &       1&        &        &        &        &        &        &        &                \\
4   &   343&        &     300&        &     200&        &      99&        &      35&        &       8&        &       1&        &        &        &        &                \\
5   &      &    1943&        &    1561&        &    1000&        &     503&        &     193&        &      54&        &      10&        &       1&        &                \\
6   & 11837&        &   10794&        &    8167&        &    5094&        &    2588&        &    1051&        &     331&        &      77&        &      12&                \\
7   &      &   69670&        &   59357&        &   42968&        &   26278&        &   13453&        &    5686&        &    1944&        &     520&        &     102        \\
8   &430819&        &  401490&        &  324653&        &  227151&        &  136849&        &   70470&        &   30692&        &   11136&        &    3277&                 \\ \hline
\end{tabular}
\end{adjustbox}
\caption{Number of paths terminating at point (n,m) for $\GCWRSet$}
\label{table:recurrence9}
\end{table}

\begin{table}[H]
\centering
\begin{adjustbox}{max width=\textwidth}
\begin{tabular}{|r||r|r|r|r|r|r|r|r|r|r|r|r|r|r|r|r|r|r|}
  \hline
  $n\backslash m$ &0 &$\abs{1}$ &$\abs{2}$ &$\abs{3}$ &$\abs{4}$ &$\abs{5}$ &$\abs{6}$ &$\abs{7}$ &$\abs{8}$ &$\abs{9}$ &$\abs{10}$ &$\abs{11}$ &$\abs{12}$ &$\abs{13}$ &$\abs{14}$ & $\abs{15}$ &$\abs{16}$ &$\abs{17}$\\ \hline \hline
0   &      1&        &        1&        &        &        &        &        &        &        &        &        &        &        &        &        &        &                \\
1   &       &       5&         &       3&        &       1&        &        &        &        &        &        &        &        &        &        &        &                \\
2   &     29&        &       24&        &      14&        &       5&        &       1&        &        &        &        &        &        &        &        &                \\
3   &       &     157&         &     119&        &      67&        &      27&        &       7&        &       1&        &        &        &        &        &                \\
4   &    943&        &      843&        &     599&        &     334&        &     142&        &      44&        &       9&        &       1&        &        &                \\
5   &       &    5447&         &    4504&        &    3064&        &    1696&        &     750&        &     257&        &      65&        &      11&        &       1        \\
6   &  33425&        &    30798&        &   24055&        &   15849&        &    8733&        &    3970&        &    1459&        &     420&        &      90&                \\
7   &       &  198697&         &  171995&        &  128603&        &   82699&        &   45417&        &   21083&        &    8151&        &    2556&        &     612        \\
8   &1233799&        &  1156962&        &  953294&        &  688653&        &  434470&        &  238012&        &  112290&        &   45078&        &   14997&                \\ \hline
\end{tabular}
\end{adjustbox}
\caption{Number of paths terminating at point (n,m) for $\GCWSet$}
\label{table:recurrence10}
\end{table}

\begin{table}[H]
\centering
\begin{adjustbox}{max width=\textwidth}
\begin{tabular}{|r||r|r|r|r|r|r|r|r|r|r|r|r|r|r|r|r|r|r|}
  \hline
  $n\backslash m$ &0 &1 &2 &3 &4 &5 &6 &7 &8 &9 &10 &11 &12 &13 &14 & 15 &16 &17\\ \hline \hline
0     &     1&        &        &        &        &        &        &        &        &        &        &        &        &        &        &        &        &                \\
1     &      &       1&        &       1&        &        &        &        &        &        &        &        &        &        &        &        &        &                \\
2     &     4&        &       5&        &       3&        &       1&        &        &        &        &        &        &        &        &        &        &                \\
3     &      &      19&        &      23&        &      14&        &       5&        &       1&        &        &        &        &        &        &        &                \\
4     &    70&        &     110&        &     107&        &      66&        &      27&        &       7&        &       1&        &        &        &        &                \\
5     &      &     439&        &     626&        &     525&        &     320&        &     141&        &      44&        &       9&        &       1&        &                \\
6     &  1684&        &    2942&        &    3344&        &    2657&        &    1591&        &     734&        &     256&        &      65&        &      11&                \\
7     &      &   11977&        &   18503&        &   18006&        &   13560&        &    8068&        &    3830&        &    1441&        &     418&        &      88        \\
8     & 47083&        &   86936&        &  107498&        &   97295&        &   70074&        &   41378&        &   20083&        &    7968&        &    2529&                \\ \hline
\end{tabular}
\end{adjustbox}
\caption{Number of paths terminating at point (n,m) for $\CWRSet$}
\label{table:recurrence11}
\end{table}

\begin{table}[H]
\centering
\begin{adjustbox}{max width=\textwidth}
\begin{tabular}{|r||r|r|r|r|r|r|r|r|r|r|r|r|r|r|r|r|r|r|}
  \hline
  $n\backslash m$ &0 &1 &2 &3 &4 &5 &6 &7 &8 &9 &10 &11 &12 &13 &14 & 15 &16 &17\\ \hline \hline
0       &     1&        &       1&        &        &        &        &        &        &        &        &        &        &        &        &        &        &                \\
1       &      &       3&        &       3&        &       1&        &        &        &        &        &        &        &        &        &        &        &                \\
2       &    11&        &      15&        &      12&        &       5&        &       1&        &        &        &        &        &        &        &        &                \\
3       &      &      59&        &      77&        &      54&        &      25&        &       7&        &       1&        &        &        &        &        &                \\
4       &   221&        &     364&        &     378&        &     260&        &     125&        &      42&        &       9&        &       1&        &        &                \\
5       &      &    1463&        &    2155&        &    1921&        &    1276&        &     636&        &     236&        &      63&        &      11&        &       1        \\
6       &  5666&        &   10120&        &   11893&        &    9948&        &    6408&        &    3259&        &    1297&        &     395&        &      88&                \\
7       &      &   41417&        &   65179&        &   65562&        &   51697&        &   32727&        &   16850&        &    7041&        &    2338&        &     583        \\
8       &163799&        &  306433&        &  386557&        &  360673&        &  270875&        &  168856&        &   87798&        &   37971&        &   13366&                 \\ \hline
\end{tabular}
\end{adjustbox}
\caption{Number of paths terminating at point (n,m) for $\CWSet$}
\label{table:recurrence12}
\end{table}
\end{landscape}
\end{document}